\newcommand{\dC}{\mathbb{C}}
\newcommand{\dR}{\mathbb{R}}
\newcommand{\dP}{\mathbb{P}}
\newcommand{\cA}{\mathcal{A}}
\newcommand{\cB}{\mathcal{B}}
\newcommand{\cH}{\mathcal{H}}
\newcommand{\cN}{\mathcal{N}}
\newcommand{\cR}{\mathcal{R}}
\newcommand{\cO}{\mathcal{O}}
\newcommand{\rI}{\mathrm{I}}
\newcommand{\rJ}{\mathrm{J}}
\newcommand{\cZ}{\mathcal{Z}}
\newcommand{\veps}{\varepsilon}
\newcommand{\wh}{\widehat}
\newcommand{\wt}{\widetilde}
\newcommand{\ind}{\mbox{1}\kern-.25em \mbox{I}}
\font\calcal=cmsy10 scaled\magstep1
\def\build#1_#2^#3{\mathrel{\mathop{\kern 0pt#1}\limits_{#2}^{#3}}}
\def\liml{\build{\longrightarrow}_{}^{{\mbox{\calcal L}}}}
\def\videbox{\mathbin{\vbox{\hrule\hbox{\vrule height1ex \kern.5em
\vrule height1ex}\hrule}}}
\def\demend{\hfill $\videbox$\\}
\numberwithin{equation}{section}
\theoremstyle{plain}
\newtheorem{lem}{Lemma}[section]
\newtheorem{thm}{Theorem}[section]
\newtheorem{rem}{Remark}[section]
\newtheorem{cor}{Corollary}[section]
\email{Bernard.Bercu@math.u-bordeaux1.fr}
\email{Bruno.Portier@insa-rouen.fr}
\email{Vvazquez@fcfm.buap.mx}
\keywords{Durbin-Watson statistic, Estimation, Adaptive control, Almost sure convergence, Central limit theorem, Statistical test for serial autocorrelation.}
\subjclass[2000]{Primary: 62G05 Secondary: 93C40, 62F05, 60F15, 60F05}
\begin{document}
\title[Asymptotic behavior of the Durbin-Watson statistic for ARX processes]
{On the asymptotic behavior of the Durbin-Watson statistic for ARX processes in adaptive tracking}
\author{Bernard Bercu}
	\address{ Universit\'e Bordeaux 1, Institut de Math\'ematiques de Bordeaux,
	UMR 5251, 351 cours de la lib\'eration, 33405 Talence cedex, France.}

\author{Bruno Portier}
\address{D\'epartement de G\'enie Math\'ematiques, Laboratoire de Math\'ematiques, INSA de Rouen, LMI-EA 3226, place Emile Blondel, BP 08, 76131 Mont-Saint-Aignan cedex, France}

\author{Victor Vazquez}
\address{ Universidad Aut\'onoma de Puebla, Facultad de Ciencias Fisico
Matem\'aticas, Avenida San Claudio y Rio Verde, 72570 Puebla, Mexico.}

\begin{abstract}
A wide literature is available on the asymptotic behavior of the Durbin-Watson statistic for autoregressive models.
However, it is impossible to find results on the Durbin-Watson statistic 
for autoregressive models with adaptive control. Our purpose is to fill the gap by establishing 
the asymptotic behavior of the Durbin Watson statistic for ARX models
in adaptive tracking. On the one hand, we show the almost sure convergence 
as well as the asymptotic normality of the least squares estimators of the unknown
parameters of the ARX models. On the other hand, we establish the almost sure convergence 
of the Durbin-Watson statistic and its asymptotic normality. Finally, we propose a bilateral statistical
test for residual autocorrelation in adaptive tracking.
\end{abstract}

\maketitle

\section{Introduction and Motivation}
\label{SectionIM}
\vspace{2ex}
The Durbin-Watson statistic was introduced in the pioneer works of Durbin and Watson \cite{DW1}, \cite{DW2}, \cite{DW3},
in order to detect the presence of a first-order autocorrelated driven noise in linear regression models.
A wide literature is available on the asymptotic behavior of the Durbin-Watson statistic for linear regression 
models and it is well-known that the statistical test based on the Durbin-Watson statistic performs pretty well
when the regressors are independent random variables. 
However, as soon as the regressors are lagged dependent variables, 
which is of course the most attractive case,
its widespread use in inappropriate situations may lead to bad conclusions. 
More precisely, it was observed by Malinvaud \cite{Malinvaud} and Nerlove and Wallis \cite{Nerlove} that
the Durbin-Watson statistic may be asymptotically biased if
the model itself and the driven noise are governed by first-order autoregressive processes. 
In order to prevent this misuse, Durbin \cite{Durbin} proposed a redesigned alternative test in the particular case 
of the first-order autoregressive process previously investigated in \cite{Malinvaud}, \cite{Nerlove}. 
More recently, Stocker \cite{Stocker} provided substantial improvements
in the study of the asymptotic behavior of the Durbin-Watson statistic
resulting from the presence of a first-order autocorrelated noise.
We also refer the reader to Bercu and Proia \cite{BPDW} for a recent sharp analysis on the
asymptotic behavior of the Durbin-Watson statistic via a martingale approach.
\ \vspace{1ex}\\
Moreover, far as the authors know, there are no established results on the Durbin-Watson statistic 
for autoregressive models with exogenous control. 
Therefore, our purpose is to investigate the asymptotic behavior of the Durbin-Watson statistic for the 
$\text{ARX}(p,q)$ processes where $p \geq 1$ and $q\geq 0$. 
We focus our attention on the $\text{ARX}(p,0)$ process, given for all $n\geq 0$, by
\begin{equation} 
\label{ARXP}
X_{n+1}=\sum_{k=1}^p\theta_k X_{n-k+1}+U_n+\varepsilon_{n+1}
\end{equation}
in which the driven noise $(\varepsilon_{n})$ follows the first-order autoregressive process
\vspace{2ex}
\begin{equation} 
\label{COR}
\varepsilon_{n+1}=\rho\varepsilon_{n}+V_{n+1}.
\vspace{2ex}
\end{equation}
We assume that the serial autocorrelation parameter satisfies $|\rho|<1$ and the initial values $X_0$, 
$\varepsilon_0$ and $U_0$ may be arbitrarily chosen. In all the sequel, we also assume that $(V_n)$ is a martingale difference
sequence adapted to the filtration $\mathbb{F} = (\mathcal{F}_n)$ where $\mathcal{F}_n$ stands
for the $\sigma$-algebra of the events occurring up to time $n$. Moreover, we suppose that, for all $n \geq 0$, 
$\mathbb{E}\left[V_{n+1}^2|\mathcal{F}_n\right]=\sigma^2$ a.s. with $\sigma^2>0$. 
Denote by $\theta$ the unknown parameter of equation (\ref{ARXP})

$$\theta = \begin{pmatrix}
\ \theta_1 \ \\
\ \theta_2 \ \\
\vdots \\
\theta_p
\end{pmatrix}.
$$

Our goal is to deal simultaneously with three objectives. The first one is to propose an efficient procedure in order to estimate
the unknown parameters $\theta$ and $\rho$ of the $\text{ARX}(p,0)$ process given by \eqref{ARXP} and \eqref{COR}.
The second one is to regulate the dynamic of the process $(X_n)$ by forcing $X_n$ to track step by step a predictable reference trajectory $(x_n)$. 
This second objective can be achieved by use of an appropriate version of the adaptive tracking control
proposed by Astr$\ddot{\mbox{o}}$m and Wittenmark \cite{Astrom}. Finally, our last objective is to establish the aymptotic properties of 
the Durbin-Watson statistic in order 
to propose a bilateral test on the serial parameter $\rho$.
\ \vspace{2ex} \\
The paper is organized as follows. Section 2 is devoted to the parameter estimation procedure and the suitable choice
of stochastic adaptive control. In Section 3, we establish the almost sure convergence of the least squares estimators of $\theta$ and $\rho$. 
The asymptotic normality of our estimates are given in Section 4.
We shall be able in Section 5 to prove the almost sure convergence of the Durbin-Watson statistic
as well as its asymptotic normality, which will lead us to propose a bilateral statistical test for 
residual autocorrelation. Some numerical simulations are provided in
Section 6. Finally, all technical proofs are postponed in the
Appendices.

\section{Estimation and Adative Control}
\label{SectionEC}

Relation \eqref{ARXP} can be rewritten as
\begin{equation} 
\label{ARXPCF}
X_{n+1}=\theta^t \varphi_{n}+U_n+\varepsilon_{n+1}
\end{equation}
where
$$\varphi_n = \begin{pmatrix}
\  X_{n} \ \\
\ X_{n-1} \ \\
\vdots \\
 X_{n-p+1}
 \vspace{1ex}
\end{pmatrix}.
$$
A naive strategy to regulate the dynamic of the process $(X_n)$ is to make use of the
Astr$\ddot{\mbox{o}}$m-Wittenmark \cite{Astrom} adaptive tracking control
$$
U_n = x_{n+1}- \wh{\theta}_n^{\, t} \varphi_n
$$
where $\wh{\theta}_n$ stands for the least squares estimator of $\theta$.
Unfortunately, we can show that this strategy leads to biased estimation
of the parameters $\theta$ and $\rho$. 
This is due to the fact that $(\varepsilon_{n})$ is not a white noise but the first-order autoregressive process given by 
\eqref{COR}. Consequently, it is necessary to adopt a more appropriate strategy which means a more suitable choice
for the adaptive control $U_n$ in \eqref{ARXPCF}.
\ \vspace{1ex} \\
The construction of our control law is as follows. Starting from (\ref{ARXP}) together with (\ref{COR}),
we easily deduce that the process $(X_n)$ satisfies the fundamental
$\text{ARX}(p+1,1)$ equation given, for all $n \geq 1$, by
\begin{eqnarray} 
X_{n+1}&=&(\theta_1+\rho)X_n+(\theta_2-\rho\theta_1)X_{n-1}+\cdots+(\theta_p-\rho\theta_{p-1})X_{n-p+1}\nonumber
 \\
& &-\rho\theta_pX_{n-p}+U_n-\rho U_{n-1} +V_{n+1} 
\label{EQFONDA}
\end{eqnarray}
which can be rewritten as
\begin{equation}
\label{ARXPNEW}
X_{n+1}=\vartheta^t\Phi_n+U_n+V_{n+1}
\end{equation}
where the new parameter $\vartheta \in \dR^{p+2}$ is defined as
\begin{equation}
\label{NEWPARA}
\vartheta = 
\begin{pmatrix}
\  \theta \ \\
\ 0 \ \\
0
\end{pmatrix}
-
\rho
\begin{pmatrix}
 -1 \ \\
\ \, \theta \ \\
\ 1
\end{pmatrix}
\end{equation}
and the new regression vector $\Phi_n$ is given by
\begin{equation*}
\Phi_n = 
\begin{pmatrix}
\  \varphi_n \ \\
\ X_{n-p} \ \\
U_{n-1}
\end{pmatrix}.
\end{equation*}
The original idea of this paper is to control the model \eqref{ARXPCF} using the adaptive control 
associated with the model \eqref{ARXPNEW} in order to a posteriori estimate the parameters  $\theta$
$\rho$ via the estimator of the parameter $\vartheta$.
We shall now focus our attention on the estimation of the unknown parameter $\vartheta$. We propose to make use of the 
least squares estimator which satisfies, for all $n\geq 0$, 
\begin{equation}  
\label{LSVARTHETA}
\wh{\vartheta}_{n+1}=
\wh{\vartheta}_{n}+S_{n}^{-1}\Phi_{n}
\Bigl(X_{n+1}-U_{n}-\wh{\vartheta}_{n}^{\, t}\Phi_n\Bigr)
\end{equation}
where the initial value $\wh{\vartheta}_{0}$ may be arbitrarily chosen and 
\begin{equation*}
S_{n}=\sum_{k=0}^{n}\Phi_{k}\Phi_{k}^{t}+\rI_{p+2}
\end{equation*}
where the identity matrix $\rI_{p+2}$ is added in order
to avoid useless invertibility assumption. On the other hand, we are concern with 
the crucial choice of the adaptive control $U_n$. The role played by $U_n$ is to regulate the dynamic of the process 
$(X_n)$ by forcing $X_n$ to track step by step a predictable reference trajectory $(x_n)$. In order
to control the dynamic of $(X_n)$ given by \eqref{ARXP}, we propose to make use of the 
Astr$\ddot{\mbox{o}}$m-Wittenmark \cite{Astrom} adaptive
tracking control associated with \eqref{ARXPNEW} and given, for all $n \geq 0$, by
\begin{equation}  
\label{CONTROL}
U_n = x_{n+1}-\wh{\vartheta}_n^{\,t}\,\Phi_n.
\end{equation}
This suitable choice of $U_n$ will allow us to control the dynamic of the process
\eqref{ARXPCF} while maintaining the optimality of the tracking
and then estimate without bias the parameters $\theta$ and $\rho$.
In all the sequel, we assume that the reference trajectory $(x_n)$ satisfies 
\begin{equation}  
\label{CT}
\sum_{k=1}^{n} x_{k}^{2} =o(n) \hspace{1cm} \text{a.s.}
\end{equation}

\section{Almost sure convergence}
\label{SectionASC}

All our asymptotic analysis relies on the following keystone lemma.
First of all, let $L$ be the identity matrix of order $p+1$ and denote by $H$ the positive real number
\begin{equation}
\label{DEFH}
H=\sum_{k=1}^p (\theta_k+\rho^k)^2+\frac{\rho^{2(p+1)}}{1-\rho^2}.
\end{equation}
In addition, for $1\leq k \leq p$, let $K_k=-(\theta_k+\rho^k)$ and denote by
$K$ the line vector 
\begin{equation}
\label{DEFK}
K=\Bigl(0,K_1,K_2,\ldots,K_p\Bigr).
\end{equation}
Moreover, let $\Lambda$ be the symmetric square matrix of order $p+2$,
\begin{equation}  
\label{DEFLAMBDA}
\Lambda=\left( 
\begin{array}{cc}
L & K^t \\ 
K &H
\end{array}
\right).
\end{equation}

\begin{lem}
\label{L-CVGSN}
Assume that $(V_n)$ has a finite conditional moment of order $>2$. Then, we have
\begin{equation}
\label{CVGSN}
\lim_{n\rightarrow \infty} \frac{1}{n}S_n=\sigma^2 \Lambda \hspace{1cm} \text{a.s.}
\end{equation}
where the limiting matrix $\Lambda$ is given by \eqref{DEFLAMBDA}.
In addition, as soon as the correlation parameter $\rho \neq 0$, the matrix $\Lambda$ is invertible and
\begin{equation}
\label{INVLAMBDA}
\Lambda^{-1}=\frac{1-\rho^2}{\rho^{2(p+1)}}\left( 
\begin{array}{cc}
SL+K^tK & -K^t \\ 
-K & 1
\end{array}
\right)
\end{equation}
where $S= H- || K ||^2$ is the Schur complement of $L$ in $\Lambda$, 
\begin{equation}
\label{DEFS}
S= \frac{\rho^{2(p+1)}}{1-\rho^2}.
\end{equation}
\end{lem}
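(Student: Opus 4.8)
The plan is to show that the empirical second-moment matrix $\tfrac1n S_n = \tfrac1n\sum_{k=0}^n \Phi_k\Phi_k^t + \tfrac1n \rI_{p+2}$ converges almost surely, entry by entry, to $\sigma^2\Lambda$. Since the $+\rI_{p+2}$ term vanishes in the limit, everything reduces to understanding the Ces\`aro averages of the products $X_{n-i}X_{n-j}$, $X_{n-i}U_{n-1}$ and $U_{n-1}^2$ that make up $\Phi_n\Phi_n^t$. First I would insert the control law \eqref{CONTROL} into the fundamental equation \eqref{ARXPNEW}, which collapses it to the closed-loop identity $X_{n+1} = x_{n+1} + V_{n+1} + (\vartheta - \wh\vartheta_n)^t\Phi_n$. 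Under the standard ELS/adaptive-tracking machinery (which should already be available from Section 2 or a companion result), the prediction-error term $(\vartheta-\wh\vartheta_n)^t\Phi_n$ is negligible in a Ces\`aro sense, i.e. $\sum_{k=1}^n \bigl((\vartheta-\wh\vartheta_k)^t\Phi_k\bigr)^2 = o(n)$ a.s., and together with assumption \eqref{CT} on the reference trajectory this yields $X_{n+1} = V_{n+1} + r_{n+1}$ with $\sum_{k\le n} r_k^2 = o(n)$. In other words, asymptotically $X_n$ behaves like the stationary AR process $\xi_n$ obtained by feeding $(V_n)$ through $(1-\rho L)^{-1}$... wait, more precisely: $X_{n+1}$ is asymptotically $V_{n+1}$ itself is wrong; one must be careful. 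Actually from \eqref{ARXP} with the bias-corrected control, the residual $X_{n+1}-x_{n+1}$ is asymptotically $\varepsilon_{n+1}$-like; I would identify the limiting stationary process precisely at this step.

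Next I would compute the limiting Ces\`aro averages. Writing $X_n \approx W_n$ where $W_n$ is the relevant stationary (autoregressive) Gaussian-type process driven by $(V_n)$, the law of large numbers for martingale-difference-driven linear processes gives $\tfrac1n\sum_{k=1}^n W_{k-i}W_{k-j} \to \sigma^2 \gamma(i-j)$ a.s., where $\gamma$ is the autocovariance function normalized so that $\gamma(0)$ and the cross-terms with $U_{n-1} = x_n - \wh\vartheta_{n-1}^t\Phi_{n-1}$ reproduce exactly the blocks $L$, $K$, $H$ of $\Lambda$. Concretely: the top-left $(p+1)\times(p+1)$ block consists of averages of $X_{n-i}X_{n-j}$ for $0\le i,j\le p$; by the closed-loop analysis these are dominated by the white-noise part $V$, giving the identity matrix $L$ (the lagged $V$'s being uncorrelated of variance $\sigma^2$). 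The last row/column involves $U_{n-1}$; substituting $U_{n-1}$ in terms of past $X$'s and the reference trajectory, and using \eqref{CT} to kill the $x$-contributions, produces the $K_k = -(\theta_k+\rho^k)$ coefficients and the constant $H$ from \eqref{DEFH}, where the geometric tail $\rho^{2(p+1)}/(1-\rho^2)$ comes from the $\varepsilon$-autocorrelation fed back through the control. I would verify these three computations carefully, as they are where the specific algebraic form of $\Lambda$ is pinned down.

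Finally, for the invertibility and the explicit inverse \eqref{INVLAMBDA} when $\rho\neq 0$: this is pure linear algebra via the Schur complement. Writing $\Lambda = \left(\begin{smallmatrix} L & K^t \\ K & H\end{smallmatrix}\right)$ with $L = \rI_{p+1}$, the Schur complement of $L$ is $S = H - K^t L^{-1} K = H - \|K\|^2$; plugging in \eqref{DEFH} and $\|K\|^2 = \sum_{k=1}^p(\theta_k+\rho^k)^2$ gives $S = \rho^{2(p+1)}/(1-\rho^2)$, which is strictly positive exactly when $\rho\neq 0$, hence $\Lambda$ is invertible. The block-inversion formula then gives $\Lambda^{-1} = \left(\begin{smallmatrix} L^{-1} + L^{-1}K^t S^{-1} K L^{-1} & -L^{-1}K^t S^{-1} \\ -S^{-1}K L^{-1} & S^{-1}\end{smallmatrix}\right)$, and since $L^{-1} = \rI_{p+1}$ and $S^{-1} = (1-\rho^2)/\rho^{2(p+1)}$ this is precisely \eqref{INVLAMBDA}. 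I expect the main obstacle to be the first part: rigorously justifying that the closed-loop error terms are Ces\`aro-negligible and correctly identifying the limiting stationary process so that the cross-covariances with $U_{n-1}$ reproduce the vector $K$ and the scalar $H$ — the block-inversion step is routine by comparison.
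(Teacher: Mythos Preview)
Your Schur-complement argument for the invertibility and the explicit form of $\Lambda^{-1}$ is correct and coincides with the paper's (which also invokes the block-inversion formula, cf.\ Remark~3.1 and the reference to Horn--Johnson). The difference is in the convergence part.

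The paper does \emph{not} compute the entries of $\tfrac{1}{n}S_n$ directly. Instead it rewrites the fundamental $\text{ARX}(p+1,1)$ equation \eqref{EQFONDA} in operator form $A(R)X_n = B(R)U_{n-1} + V_n$ with $B(z)=1-\rho z$, introduces the causal expansion
\[
P(z)=B^{-1}(z)\bigl(A(z)-1\bigr)=\sum_{k\ge 1}p_k z^k,
\]
and checks that $p_k=-(\theta_k+\rho^k)$ for $1\le k\le p$ and $p_k=-\rho^k$ for $k\ge p+1$. The point is that these $p_k$ are exactly the quantities $K_k$ in \eqref{DEFK}, and $\sum_{k\ge 1}p_k^2=H$ in \eqref{DEFH}. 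With this identification, both the convergence \eqref{CVGSN} and the rate in Theorem~\ref{T-ASCVGVARTHETA} follow \emph{simultaneously} from Theorem~5 of \cite{BeVa2} on strongly controllable ARX models (the condition $\rho\neq 0$ being precisely the coprimeness of $A-1$ and $B$). So the paper's route is: recognise the polynomial $P$, then cite a ready-made limit theorem.

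Your route---compute the blocks of $\Lambda$ by hand from the closed-loop identity $X_{n+1}=x_{n+1}+(\vartheta-\wh\vartheta_n)^t\Phi_n+V_{n+1}$---is viable, but two points deserve care. First, the closed loop under control \eqref{CONTROL} makes $X_{n+1}-x_{n+1}$ asymptotically $V_{n+1}$, \emph{not} $\varepsilon_{n+1}$; your hesitation on this is resolved by noting that \eqref{CONTROL} is the control for the augmented model \eqref{ARXPNEW}, whose innovation is $V$, not $\varepsilon$. Second, the Ces\`aro negligibility of $\sum_k\bigl((\vartheta-\wh\vartheta_k)^t\Phi_k\bigr)^2$ is exactly the kind of statement that in adaptive tracking is proved \emph{jointly} with the convergence of $S_n/n$ and of $\wh\vartheta_n$ (this is what \cite{BeVa2} packages); invoking it as a separate input risks circularity unless you point to an independent excitation/tracking lemma. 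What your direct computation buys is self-containment and a transparent reason why $K_k=-(\theta_k+\rho^k)$ and $H=\sum p_k^2$ appear; what the paper's approach buys is brevity and the simultaneous proof of Lemma~\ref{L-CVGSN} and Theorem~\ref{T-ASCVGVARTHETA} without redoing the martingale estimates.
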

\noindent{\bf Proof.}
The proof is given in Appendix\,A. \demend
\vspace{-2ex} 
\begin{rem}
As $L$ is the identity matrix of order $p+1$, we clearly have
\begin{equation*}
\det(\Lambda)=\frac{\rho^{2(p+1)}}{1-\rho^2}.
\end{equation*} 
Consequently, as long as $\rho \neq 0$, $\det(\Lambda) \neq 0$ which of course implies that
the matrix $\Lambda$ is invertible.  The identity \eqref{INVLAMBDA} comes from the block matrix
inversion formula given e.g. by Horn and Johnson \cite{Horn}, page 18.
\end{rem}

The almost sure properties of the least squares estimator $\wh{\vartheta}_{n}$  of $\vartheta$ are as follows.

\begin{thm} 
\label{T-ASCVGVARTHETA}
Assume that the serial correlation parameter $\rho\neq 0$ and that $(V_n)$ has a finite conditional moment of order $>2$. 
Then, $\wh{\vartheta}_{n}$ converges almost surely to $\vartheta$, 
\begin{equation}  
\label{ASCVGVARTHETA}
\parallel \wh{\vartheta}_{n}-\vartheta \parallel^{2}= \mathcal{O} 
\left( \frac{\log n}{n} \right) 
\hspace{0.5cm}\text{a.s.}
\end{equation}
\end{thm}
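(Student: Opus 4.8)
The plan is to exploit the standard least squares machinery for stochastic regression, as developed for instance in the Lai–Wei framework and subsequently refined by Bercu and coauthors, now applied to the model \eqref{ARXPNEW}. The starting point is the elementary identity for recursive least squares: writing $e_n = X_{n+1} - U_n - \wh{\vartheta}_n^{\,t}\Phi_n$ for the prediction error and using \eqref{ARXPNEW}, one has $e_n = V_{n+1} - (\wh{\vartheta}_n - \vartheta)^t \Phi_n$, so that from \eqref{LSVARTHETA} a telescoping/Lyapunov argument yields the classical inequality
\begin{equation*}
\parallel \wh{\vartheta}_{n}-\vartheta \parallel^{2} \leq \frac{C}{\lambda_{\min}(S_n)}\Bigl( \log \det(S_n) + \text{(a.s. bounded terms)} \Bigr)
\end{equation*}
up to the usual martingale fluctuation term $\sum_{k=0}^{n} (\wh{\vartheta}_k-\vartheta)^t \Phi_k V_{k+1}$, whose growth is controlled by the strong law of large numbers for martingales. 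The key point is that one does not actually need a separate excitation lemma here, because Lemma \ref{L-CVGSN} already provides it for free: since $\rho \neq 0$, the limiting matrix $\sigma^2\Lambda$ is positive definite, hence $\lambda_{\min}(S_n)$ grows exactly like $n$ and $\log\det(S_n) = \mathcal{O}(\log n)$ almost surely. Plugging these two facts into the inequality above gives directly $\parallel \wh{\vartheta}_{n}-\vartheta \parallel^{2} = \mathcal{O}(\log n / n)$ a.s.

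First I would make precise the recursive error decomposition and derive the Lyapunov-function inequality: set $W_n = (\wh{\vartheta}_n - \vartheta)^t S_{n-1} (\wh{\vartheta}_n - \vartheta)$ (or the analogous quantity with $S_n$), expand $W_{n+1}$ using \eqref{LSVARTHETA}, and collect terms so that $W_{n+1} + \sum_{k} \frac{(e_k - V_{k+1})^2}{1 + \Phi_k^t S_{k-1}^{-1}\Phi_k}$ is bounded above by $W_0$ plus a martingale transform plus $\sum_k \Phi_k^t S_k^{-1}\Phi_k \, V_{k+1}^2$; the last sum is $\mathcal{O}(\log\det S_n)$ a.s. by the standard determinant-ratio argument, and the martingale term is handled by its quadratic variation, which is itself dominated by the same logarithmic quantity, so that the robbins–Siegmund / strong law for martingales lemma closes the estimate.

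Second, I would feed in Lemma \ref{L-CVGSN}: from \eqref{CVGSN} and $\det \Lambda = \rho^{2(p+1)}/(1-\rho^2) \neq 0$ one gets $\liminf_n \lambda_{\min}(S_n)/n = \sigma^2 \lambda_{\min}(\Lambda) > 0$ a.s., and $\det(S_n) = \mathcal{O}(n^{p+2})$ a.s., hence $\log\det(S_n) = \mathcal{O}(\log n)$. Combining with the inequality from the first step yields \eqref{ASCVGVARTHETA}, and in particular the almost sure convergence $\wh{\vartheta}_n \to \vartheta$.

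The main obstacle is not in this argument per se — once Lemma \ref{L-CVGSN} is available, the rest is the textbook least-squares rate — but rather in the fact that the control $U_n$ in \eqref{CONTROL} is itself built from $\wh{\vartheta}_n$, so the regression vector $\Phi_n$ and the noise are entangled through the closed loop; one must check that substituting \eqref{CONTROL} into \eqref{ARXPNEW} gives the clean relation $X_{n+1} = x_{n+1} + V_{n+1} + (\vartheta - \wh{\vartheta}_n)^t \Phi_n$, which together with the trajectory condition \eqref{CT} keeps $\sum_{k=1}^n \parallel\Phi_k\parallel^2 = \mathcal{O}(n)$ a.s. and thereby justifies both the $\log\det S_n = \mathcal{O}(\log n)$ bound and the applicability of Lemma \ref{L-CVGSN}. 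I would isolate this closed-loop bookkeeping as the delicate step and treat it carefully before invoking the generic least-squares estimates; the remaining computations I would relegate to the appendix as routine.
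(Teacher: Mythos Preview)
Your plan is correct, but it takes a genuinely different route from the paper's own proof. The paper does not carry out the Lyapunov/Lai--Wei computation you outline at all. Instead, in Appendix~A it rewrites \eqref{EQFONDA} in shift-operator form $A(R)X_n=B(R)U_{n-1}+V_n$ with $A(z)=1-\sum_{k=1}^{p+1}a_kz^k$ and $B(z)=1-\rho z$, computes the coefficients of $P(z)=B^{-1}(z)(A(z)-1)$, observes that $\rho\neq 0$ is precisely the coprimeness of $A-1$ and $B$ (equivalently, the strong controllability condition of Bercu--V\'azquez), and then invokes Theorem~5 of \cite{BeVa2} as a black box. That single citation yields both Lemma~\ref{L-CVGSN} and the rate \eqref{ASCVGVARTHETA} simultaneously; no separate least-squares argument is written out.

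What your approach buys is self-containment: given Lemma~\ref{L-CVGSN}, the Lyapunov quantity $W_n=(\wh{\vartheta}_n-\vartheta)^tS_{n-1}(\wh{\vartheta}_n-\vartheta)$, the determinant-ratio bound on $\sum_k\Phi_k^tS_k^{-1}\Phi_kV_{k+1}^2=\mathcal{O}(\log\det S_n)$, and the martingale strong law give \eqref{ASCVGVARTHETA} without appealing to \cite{BeVa2}. The one point to tighten is the step you flag yourself: the claim that the closed-loop relation $X_{n+1}-x_{n+1}=V_{n+1}+(\vartheta-\wh{\vartheta}_n)^t\Phi_n$ together with \eqref{CT} directly forces $\sum_{k=1}^n\parallel\Phi_k\parallel^2=\mathcal{O}(n)$ is circular as stated, since bounding $\parallel\Phi_k\parallel$ requires controlling the prediction errors, which in turn requires the estimate you are trying to prove. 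In the paper this loop is broken inside \cite{BeVa2} via the strong-controllability excitation argument that also produces Lemma~\ref{L-CVGSN}. In your write-up the cleanest fix is simply to take Lemma~\ref{L-CVGSN} as an already-proven input (which is its role in the paper's logical order) and drop the separate closed-loop bookkeeping; then $\lambda_{\min}(S_n)\sim n$ and $\log\det S_n=\mathcal{O}(\log n)$ follow immediately, and your argument goes through.
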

\noindent{\bf Proof.}
The proof is given in Appendix\,A. \demend
\ \vspace{-1ex} \\
We shall now explicit the estimators of $\theta$ and $\rho$ and their convergence results.
It follows from \eqref{NEWPARA} that
\begin{equation}
\label{INIPARA}
\begin{pmatrix}
\ \theta \ \\
\ \rho \
\end{pmatrix}
 = \Delta \vartheta
\end{equation}
where $\Delta$ is the rectangular matrix of size $(p+1)\!\times\!(p+2)$ given by
\begin{equation}
\label{DEFDELTA}
\Delta=
\begin{pmatrix}
1 & 0 & \cdots & \cdots & \cdots & 0 & 1 \\ 
\rho & 1 & 0 & \cdots & \cdots & 0 & \rho \\ 
\rho^2 & \rho & 1 & 0 & \cdots & 0 & \rho^2 \\ 
\cdots & \cdots & \cdots & \cdots & \cdots & \cdots & \cdots \\ 
\rho^{p-1} & \rho^{p-2} & \cdots & \rho & 1 & 0 & \rho^{p-1} \\ 
0 & 0 & \cdots & \cdots & \cdots & 0 & -1
\end{pmatrix}.
\vspace{1ex}
\end{equation}
Consequently, a natural choice to estimate the initial parameters $\theta$ and $\rho$ is to make use of
\begin{equation}
\label{LSTHETARHO}
\begin{pmatrix}
\ \wh{\theta}_{n} \ \\
\ \wh{\rho}_{n} \
\end{pmatrix}
 = \wh{\Delta}_{n} \wh{\vartheta}_{n}
\end{equation}
where $\wh{\rho}_{n}$ is simply the opposite of the last coordinate of $\wh{\vartheta}_{n}$ and
\begin{equation}
\label{DEFDELTAN}
\wh{\Delta}_{n}=
\begin{pmatrix}
1 & 0 & \cdots & \cdots & \cdots & 0 & 1 \\ 
\wh{\rho}_{n} & 1 & 0 & \cdots & \cdots & 0 & \wh{\rho}_{n} \\ 
\wh{\rho}_{n}^{\,2} & \wh{\rho}_{n} & 1 & 0 & \cdots & 0 & \wh{\rho}_{n}^{\,2}  \\ 
\cdots & \cdots & \cdots & \cdots & \cdots & \cdots & \cdots \\ 
\wh{\rho}_{n}^{\,p-1}  & \wh{\rho}_{n}^{\,p-2} & \cdots & \wh{\rho}_{n} & 1 & 0 & \wh{\rho}_{n}^{\,p-1}  \\ 
0 & 0 & \cdots & \cdots & \cdots & 0 & -1
\end{pmatrix}
.
\end{equation}

\begin{cor}
\label{C-ASCVGTHETARHO}
Assume that the serial correlation parameter $\rho\neq 0$ and that $(V_n)$ has a finite conditional moment of order $>2$. 
Then, $\wh{\theta}_{n}$ and $\wh{\rho}_{n}$ both converge almost surely to $\theta$ and $\rho$,
\begin{equation}  
\label{ASCVGTHETA}
\parallel \wh{\theta}_{n}-\theta \parallel^{2}= \mathcal{O} 
\left( \frac{\log n}{n} \right) 
\hspace{0.5cm}\text{a.s.}
\end{equation}
\begin{equation}  
\label{ASCVGRHO}
 (\wh{\rho}_{n}-\rho)^{2}= \mathcal{O} 
\left( \frac{\log n}{n} \right) 
\hspace{0.5cm}\text{a.s.}
\end{equation}
\end{cor}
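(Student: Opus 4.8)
The plan is to derive the corollary directly from Theorem \ref{T-ASCVGVARTHETA}, using only the algebraic relations \eqref{INIPARA} and \eqref{LSTHETARHO}; no new probabilistic input is needed. First, since $\wh{\rho}_n$ is by construction the opposite of the last coordinate of $\wh{\vartheta}_n$, while, by \eqref{NEWPARA}, $\rho$ is the opposite of the last coordinate of $\vartheta$, one has the trivial bound $(\wh{\rho}_n-\rho)^2\leq \parallel \wh{\vartheta}_n-\vartheta\parallel^2$. Together with \eqref{ASCVGVARTHETA} this immediately gives \eqref{ASCVGRHO}, and in particular $\wh{\rho}_n\to\rho$ almost surely.

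Next I would handle $\wh{\theta}_n$ by writing, thanks to \eqref{INIPARA} and \eqref{LSTHETARHO},
\[
\begin{pmatrix}\wh{\theta}_n\\ \wh{\rho}_n\end{pmatrix}-\begin{pmatrix}\theta\\ \rho\end{pmatrix}
=\wh{\Delta}_n\wh{\vartheta}_n-\Delta\vartheta
=\wh{\Delta}_n(\wh{\vartheta}_n-\vartheta)+(\wh{\Delta}_n-\Delta)\vartheta ,
\]
and then estimating the two terms separately. For the first one, observe that the entries of $\wh{\Delta}_n$ are only $0$, $1$ and the powers $\wh{\rho}_n,\ldots,\wh{\rho}_n^{\,p-1}$; since $\wh{\rho}_n\to\rho$ with $|\rho|<1$, the sequence $(\wh{\rho}_n)$ is almost surely bounded, hence $\parallel\wh{\Delta}_n\parallel=\mathcal{O}(1)$ a.s., and therefore $\parallel\wh{\Delta}_n(\wh{\vartheta}_n-\vartheta)\parallel^2\leq\parallel\wh{\Delta}_n\parallel^2\parallel\wh{\vartheta}_n-\vartheta\parallel^2=\mathcal{O}(\log n/n)$ a.s. by \eqref{ASCVGVARTHETA}.

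For the second term, the matrix $\wh{\Delta}_n-\Delta$ has nonzero entries only of the form $\wh{\rho}_n^{\,k}-\rho^k$ with $1\leq k\leq p-1$. Using the telescoping identity $\wh{\rho}_n^{\,k}-\rho^k=(\wh{\rho}_n-\rho)\sum_{j=0}^{k-1}\wh{\rho}_n^{\,j}\rho^{k-1-j}$ together with the almost sure boundedness of $(\wh{\rho}_n)$ and $|\rho|<1$, one gets $|\wh{\rho}_n^{\,k}-\rho^k|\leq C\,|\wh{\rho}_n-\rho|$ a.s., so that $\parallel\wh{\Delta}_n-\Delta\parallel=\mathcal{O}(|\wh{\rho}_n-\rho|)$ a.s. Consequently $\parallel(\wh{\Delta}_n-\Delta)\vartheta\parallel^2\leq\parallel\wh{\Delta}_n-\Delta\parallel^2\parallel\vartheta\parallel^2=\mathcal{O}((\wh{\rho}_n-\rho)^2)=\mathcal{O}(\log n/n)$ a.s. by the already proved \eqref{ASCVGRHO}. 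Combining the two bounds via $\parallel a+b\parallel^2\leq 2\parallel a\parallel^2+2\parallel b\parallel^2$ yields \eqref{ASCVGTHETA}, and the almost sure convergence of $\wh{\theta}_n$ to $\theta$ follows.

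There is no genuine obstacle here: once Theorem \ref{T-ASCVGVARTHETA} is in hand, the argument is a purely deterministic manipulation. The only point requiring a little care is the justification that the \emph{random} matrix $\wh{\Delta}_n$ has almost surely bounded norm (and that differences of its powers are controlled by $\wh{\rho}_n-\rho$), which rests entirely on the convergence $\wh{\rho}_n\to\rho$ and on $|\rho|<1$; the rest is the triangle inequality.
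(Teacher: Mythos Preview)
Your proof is correct and follows essentially the same approach as the paper: first extract the rate for $\wh{\rho}_n$ from the last coordinate of $\wh{\vartheta}_n-\vartheta$, then control $\parallel\wh{\Delta}_n-\Delta\parallel$ by $|\wh{\rho}_n-\rho|$ and combine with \eqref{ASCVGVARTHETA} via the decomposition $\wh{\Delta}_n\wh{\vartheta}_n-\Delta\vartheta=\wh{\Delta}_n(\wh{\vartheta}_n-\vartheta)+(\wh{\Delta}_n-\Delta)\vartheta$. The paper's version is terser (it does not write the decomposition explicitly), but the logical content is identical.
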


\begin{proof}
One can immediately see from \eqref{INIPARA}  that the last component of
the vector $\vartheta$ is $-\rho$. The same is true for the estimator
$\wh{\rho}_{n}$ of $\rho$. Consequently, we deduce
from \eqref{ASCVGVARTHETA} that $\wh{\rho}_{n}$ converges a.s. to $\rho$ with the almost sure rate of convergence
given by \eqref{ASCVGRHO}. Therefore, we obtain from \eqref{DEFDELTA} and \eqref{DEFDELTAN} that
$$
\parallel \wh{\Delta}_n - \Delta \parallel^2= \mathcal{O} 
\left( \frac{\log n}{n} \right) 
\hspace{0.5cm}\text{a.s.}
$$
which ensures via  \eqref{ASCVGVARTHETA} and \eqref{LSTHETARHO} that $\wh{\theta}_{n}$ 
converges a.s. to $\theta$ with the almost sure rate of convergence
given by \eqref{ASCVGTHETA}.
\end{proof}

\section{Asymptotic Normality}
\label{SectionAN}

This Section is devoted to the asymptotic normality of the estimators associated with $\theta$ and $\rho$
which is obtained from the asymptotic normality of the least squares estimator $\wh{\vartheta}_n$ of $\vartheta$.

\begin{thm} 
\label{T-CLTVARTHETA}
Assume that the serial correlation parameter $\rho\neq 0$ and that $(V_n)$ has a finite conditional moment of order $>2$. 
In addition, suppose that $(x_n)$ has the same almost sure regularity as $(V_n)$.
Then, we have
\begin{equation}  
\label{CLTVARTHETA}
\sqrt{n}(\wh{\vartheta}_{n}-\vartheta )\liml
\cN(0,\Lambda^{-1} ) 
\end{equation}
where the matrix $\Lambda^{-1}$ is given by \eqref{INVLAMBDA}.
\end{thm}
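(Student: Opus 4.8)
The plan is to derive the central limit theorem for $\wh{\vartheta}_n$ from the standard asymptotic normality results for the least squares estimator in stochastic regression, in the spirit of Lai and Wei. First I would rewrite the estimation error recursion \eqref{LSVARTHETA}: since $X_{n+1}-U_n-\wh{\vartheta}_n^{\,t}\Phi_n = \vartheta^t\Phi_n + V_{n+1} - \wh{\vartheta}_n^{\,t}\Phi_n = V_{n+1} - (\wh{\vartheta}_n - \vartheta)^t\Phi_n$, one gets
\begin{equation*}
\wh{\vartheta}_{n+1}-\vartheta = \wh{\vartheta}_n - \vartheta + S_n^{-1}\Phi_n\bigl(V_{n+1} - (\wh{\vartheta}_n-\vartheta)^t\Phi_n\bigr),
\end{equation*}
so that $S_n(\wh{\vartheta}_{n+1}-\vartheta) = S_{n-1}(\wh{\vartheta}_n-\vartheta) + \Phi_n V_{n+1}$ (using $S_n = S_{n-1}+\Phi_n\Phi_n^t$), and telescoping yields the explicit representation $S_n(\wh{\vartheta}_{n+1}-\vartheta) = S_{-1}(\wh{\vartheta}_0 - \vartheta) + M_{n+1}$ where $M_{n+1} = \sum_{k=0}^n \Phi_k V_{k+1}$ is a locally square-integrable martingale adapted to $\mathbb{F}$.

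The second step is to apply the martingale central limit theorem to $n^{-1/2}M_{n+1}$. Its predictable quadratic variation is $\langle M\rangle_n = \sum_{k=0}^n \Phi_k\Phi_k^t\, \mathbb{E}[V_{k+1}^2\mid\mathcal{F}_k] = \sigma^2(S_n - \rI_{p+2})$, hence by Lemma~\ref{L-CVGSN} we have $n^{-1}\langle M\rangle_n \to \sigma^4\Lambda$ almost surely. The Lindeberg condition follows from the assumption that $(V_n)$ has a conditional moment of order $>2$ together with the almost sure regularity of $(x_n)$, which controls the tails of $\Phi_k$ (here one uses that $\Phi_k$ is built from past values of $X$ and $U$, and the growth of $\|\Phi_k\|$ is governed by the excitation condition \eqref{CT} and the boundedness in the quadratic mean guaranteed by Lemma~\ref{L-CVGSN}); a standard truncation argument then gives $n^{-1/2}M_{n+1}\liml\cN(0,\sigma^4\Lambda)$. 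Combining this with $S_n/n \to \sigma^2\Lambda$ a.s.\ and Slutsky's lemma, and noting that $S_{-1}(\wh{\vartheta}_0-\vartheta)/\sqrt{n}\to 0$, we obtain
\begin{equation*}
\sqrt{n}(\wh{\vartheta}_{n+1}-\vartheta) = \Bigl(\frac{S_n}{n}\Bigr)^{-1}\frac{1}{\sqrt{n}}\bigl(S_{-1}(\wh{\vartheta}_0-\vartheta) + M_{n+1}\bigr) \liml (\sigma^2\Lambda)^{-1}\cN(0,\sigma^4\Lambda) = \cN(0,\Lambda^{-1}),
\end{equation*}
and the same limit holds with $\wh{\vartheta}_{n+1}$ replaced by $\wh{\vartheta}_n$ since the index shift is asymptotically negligible.

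I expect the main obstacle to be the verification of the Lindeberg condition for the martingale increments $\Phi_k V_{k+1}$, because this requires a genuine quantitative control on the growth of $\|\Phi_k\|$ — one must show something like $\|\Phi_k\|^2 = o(k)$ uniformly enough (or more precisely that $\max_{k\le n}\|\Phi_k\|^2 = o(n)$ in an appropriate sense) so that the conditional moment of order $>2$ of $V_{k+1}$ can absorb the normalization. This is where the regularity assumption on $(x_n)$ (that it has the same almost sure regularity as $(V_n)$, i.e.\ a conditional moment of order $>2$ controlling $\sup x_n^2 / $ suitable rate) genuinely enters, as opposed to the weaker condition \eqref{CT} used only for the almost sure results; the rest of the argument is a routine assembly of the telescoping identity, Lemma~\ref{L-CVGSN}, the martingale CLT, and Slutsky's lemma. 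The detailed proof is given in Appendix~B.
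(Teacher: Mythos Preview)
Your argument is correct and is precisely the standard martingale decomposition underlying the result: the telescoping identity $S_{n-1}(\wh{\vartheta}_n-\vartheta)=(\wh{\vartheta}_0-\vartheta)+M_n$ with $M_n=\sum_{k=1}^n\Phi_{k-1}V_k$, the convergence $n^{-1}\langle M\rangle_n\to\sigma^4\Lambda$ from Lemma~\ref{L-CVGSN}, the Lindeberg condition via the moment assumption on $(V_n)$ and the regularity of $(x_n)$, and Slutsky's lemma. The paper itself does not spell any of this out; its proof of Theorem~\ref{T-CLTVARTHETA} consists of a single sentence invoking Theorem~8 of \cite{BeVa2}, which is a general CLT for least squares in strongly controllable ARX systems and whose proof follows exactly the route you describe. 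So you have not taken a different approach so much as unpacked the cited black box, and your identification of the Lindeberg verification (with the control $\max_{k\le n}\|\Phi_k\|^2=o(n)$ coming from the regularity hypothesis on $(x_n)$) as the one nontrivial ingredient is exactly right. The payoff of your version is self-containment; the payoff of the paper's is brevity and the ability to reuse the general machinery of \cite{BeVa2} already set up for Lemma~\ref{L-CVGSN} and Theorem~\ref{T-ASCVGVARTHETA}.
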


In order to provide the joint asymptotic normality of the estimators of $\theta$ and $\rho$, denote, for all 
$1\leq k \leq p-1$, 
$$
\xi_k=\sum_{i=1}^{k}\rho^{k-i}\theta_i
$$
and let $\nabla$ be the rectangular matrix of size $(p+1)\!\times\!(p+2)$ given by
\begin{equation}
\label{DEFNABLA}
\nabla=
\begin{pmatrix}
1 & 0 & \cdots & \cdots & \cdots & 0 & 1 \\ 
\rho & 1 & 0 & \cdots & \cdots & 0 & \rho - \xi_1 \\ 
\rho^2 & \rho & 1 & 0 & \cdots & 0 & \rho^2 -  \xi_2\\ 
\cdots & \cdots & \cdots & \cdots & \cdots & \cdots & \cdots \\ 
\rho^{p-1} & \rho^{p-2} & \cdots & \rho & 1 & 0 & \rho^{p-1} -\xi_{p-1}\\ 
0 & 0 & \cdots & \cdots & \cdots & 0 & -1
\end{pmatrix}.
\end{equation}

\begin{cor}
\label{C-CLTTHETARHO}
Assume that the serial correlation parameter $\rho\neq 0$ and that $(V_n)$ has a finite conditional moment of order $>2$. 
In addition, suppose that $(x_n)$ has the same almost sure regularity as $(V_n)$.
Then, we have
\begin{equation}
\label{CLTTHETARHO}
\sqrt{n}
\begin{pmatrix}
\ \wh{\theta}_{n} - \theta \ \\
\ \wh{\rho}_{n} - \rho \
\end{pmatrix}
\liml \cN(0, \Sigma )
\end{equation}
where $\Sigma =\nabla \Lambda^{-1} \nabla^{t}$. In particular,
\begin{equation}
\label{CLTRHO}
\sqrt{n}(\widehat{\rho}_{n}-\rho)\liml
\cN\left(0,\frac{1-\rho^2}{\rho^{2(p+1)}}\right).
\end{equation}
\end{cor}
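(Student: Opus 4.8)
The plan is to write the pair $(\widehat{\theta}_n,\widehat{\rho}_n)^t$ as a single fixed $C^1$ function of $\widehat{\vartheta}_n$ --- note that the matrix $\widehat{\Delta}_n$ in \eqref{LSTHETARHO} is itself random, so one cannot merely apply a deterministic linear map to \eqref{CLTVARTHETA} --- and then invoke the multivariate delta method together with Theorem~\ref{T-CLTVARTHETA}. For $v\in\dR^{p+2}$ let $\rho(v)=-v_{p+2}$ denote the opposite of the last coordinate of $v$, and for $r\in\dR$ let $\Delta(r)$ be the matrix \eqref{DEFDELTA} with every occurrence of $\rho$ replaced by $r$. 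Since $\widehat{\rho}_n$ is precisely the opposite of the last coordinate of $\widehat{\vartheta}_n$, comparison of \eqref{DEFDELTA} and \eqref{DEFDELTAN} shows that $\widehat{\Delta}_n=\Delta(\rho(\widehat{\vartheta}_n))$. Hence, putting $g(v)=\Delta(\rho(v))\,v$, relation \eqref{LSTHETARHO} becomes $(\widehat{\theta}_n,\widehat{\rho}_n)^t=g(\widehat{\vartheta}_n)$, while \eqref{INIPARA} reads $(\theta,\rho)^t=g(\vartheta)$ because $\rho(\vartheta)=-\vartheta_{p+2}=\rho$. Each entry of $\Delta(r)$ is a polynomial in $r$, so $g$ is polynomial, hence $C^1$ on $\dR^{p+2}$; combining \eqref{CLTVARTHETA} with the delta method then gives
\begin{equation*}
\sqrt{n}\bigl(g(\widehat{\vartheta}_n)-g(\vartheta)\bigr)\liml\cN\bigl(0,\,J\Lambda^{-1}J^t\bigr),
\end{equation*}
where $J$ is the Jacobian of $g$ at $\vartheta$. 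It thus suffices to show $J=\nabla$, with $\nabla$ the matrix \eqref{DEFNABLA}, and then $\Sigma=\nabla\Lambda^{-1}\nabla^t$.

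To compute $J$, write $g_i(v)=\sum_{j}\Delta_{ij}(\rho(v))\,v_j$. Because $\rho(v)$ depends only on $v_{p+2}$, for $1\le k\le p+1$ we obtain $(J)_{ik}=\Delta_{ik}$, so the first $p+1$ columns of $J$ coincide with those of $\Delta$, which are exactly the first $p+1$ columns of $\nabla$. For the last column, differentiating through the $r$-dependence yields
\begin{equation*}
(J)_{i,p+2}=\Delta_{i,p+2}-\sum_{j}\Delta'_{ij}(\rho)\,\vartheta_j ,
\end{equation*}
where $\Delta'_{ij}$ is the derivative of $\Delta_{ij}(r)$ in $r$. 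For $i=p+1$ this equals $-1$, as required. For $1\le i\le p$, substituting the entries $\Delta_{ij}=\rho^{\,i-j}$ $(1\le j\le i)$, $\Delta_{i,p+2}=\rho^{\,i-1}$ and the explicit coordinates $\vartheta_1=\theta_1+\rho$, $\vartheta_j=\theta_j-\rho\theta_{j-1}$ $(2\le j\le p)$, $\vartheta_{p+2}=-\rho$, the identity to be verified is $\sum_{j}\Delta'_{ij}(\rho)\,\vartheta_j=\xi_{i-1}$ (with the convention $\xi_0=0$), which follows from a short telescoping computation of the sums over $\theta_1,\dots,\theta_{i-1}$ together with the definition $\xi_k=\sum_{m=1}^{k}\rho^{\,k-m}\theta_m$. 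This gives $(J)_{i,p+2}=\rho^{\,i-1}-\xi_{i-1}$, so indeed $J=\nabla$.

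Finally, \eqref{CLTRHO} is immediate: since $\widehat{\rho}_n-\rho=-(\widehat{\vartheta}_{n,p+2}-\vartheta_{p+2})$ is $-1$ times the last coordinate of $\widehat{\vartheta}_n-\vartheta$, \eqref{CLTVARTHETA} gives $\sqrt{n}(\widehat{\rho}_n-\rho)\liml\cN\bigl(0,(\Lambda^{-1})_{p+2,p+2}\bigr)$, and by \eqref{INVLAMBDA} the bottom-right entry of $\Lambda^{-1}$ equals $(1-\rho^2)/\rho^{2(p+1)}$; this is also the $(p+1,p+1)$ entry of $\Sigma$ since the last row of $\nabla$ is $(0,\dots,0,-1)$. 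The only genuinely computational point of the whole argument is the identification of the last column of $J$, i.e.\ the identity $\sum_{j}\Delta'_{ij}(\rho)\vartheta_j=\xi_{i-1}$; once that is established, everything else is a routine application of the delta method, whose moment condition on $(V_n)$ and regularity condition on $(x_n)$ are exactly those already imposed in Theorem~\ref{T-CLTVARTHETA}.
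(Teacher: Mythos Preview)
Your proof is correct and follows essentially the same approach as the paper: both define the map $g(v)=\Delta(\rho(v))\,v$ (the paper writes out its components $s_k(\vartheta)$ explicitly), compute its Jacobian at $\vartheta$ and identify it with the matrix $\nabla$ of \eqref{DEFNABLA}, and then apply the multivariate delta method on top of \eqref{CLTVARTHETA}. Your derivation of \eqref{CLTRHO} via the $(p+2,p+2)$ entry of $\Lambda^{-1}$ is a minor variant, equivalent to reading off the $(p+1,p+1)$ entry of $\Sigma$ as the paper implicitly does.
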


\noindent{\bf Proof.}
The proof is given in Appendix\,B. \demend

\section{On the Durbin Watson statistic}
\label{SectionDW}

We now investigate the asymptotic behavior of the Durbin-Watson statistic \cite{DW1}, \cite{DW2}, \cite{DW3} given, for all $n\geq 1$, by 

\begin{equation}
\label{DEFDN}
\wh{D}_n=\frac{\sum_{k=1}^{n}\left(\widehat{\varepsilon}_k-\widehat{\varepsilon}_{k-1}\right)^2}{\sum_{k=0}^{n}\widehat{\varepsilon}_{k}^{\,2}}
\end{equation}
where the residuals $\widehat{\varepsilon}_{k}$ are defined, for all $1\leq k\leq n$, by
\begin{equation}
\label{RESIDUALN}
\widehat{\varepsilon}_{k}=X_k-U_{k-1}-\wh{\theta}_{n}^{\,t}\varphi_{k-1}
\end{equation}
with $\wh{\theta}_{n}$ given by \eqref{LSTHETARHO}.
The initial value $\widehat{\varepsilon}_{0}$ may be arbitrarily chosen and we take $\widehat{\varepsilon}_{0}=X_{0}$. 
One can observe that it is also possible to estimate the serial correlation parameter $\rho$ 
by the least squares estimator
\begin{equation} 
\label{NEWLSRHO}
\overline{\rho}_n=\frac{\sum_{k=1}^{n}\widehat{\varepsilon}_k\widehat{\varepsilon}_{k-1}}{\sum_{k=1}^{n}\widehat{\varepsilon}_{k-1}^{\,2}}
\end{equation}
which is the natural estimator of $\rho$ in the autoregressive framework without control.
The Durbin-Watson statistic $\wh{D}_n$ is related to $\overline{\rho}_n$ by the linear relation
\begin{equation}
\label{RELRHODN}
\wh{D}_n=2(1-\overline{\rho}_n)+\zeta_n
\end{equation}
where the remainder term $\zeta_n$ plays a negligeable role. 
The almost sure properties of $\wh{D}_n$ and $\rho_n$ are as follows.

\begin{thm}
\label{T-ASCVGDW}
Assume that the serial correlation parameter $\rho \! \neq\! 0$ and that $(V_n)$ has a finite conditional moment of order $>2$. 
Then, $\overline{\rho}_n$ converges almost surely to $\rho$,
\begin{equation}  
\label{ASCVGNRHO}
( \overline{\rho}_{n}-\rho)^{2}= \mathcal{O} \left( \frac{\log n}{n} \right) 
\hspace{0.5cm}\text{a.s.}
\end{equation}
In addition, $\wh{D}_{n}$ converges almost surely to $D=2(1-\rho)$.
Moreover, if $(V_n)$ has a finite conditional moment of order $>4$, we also have
\begin{equation}  
\label{ASCVGDW}
\Bigl( \wh{D}_{n}-D \Bigr)^{2}= \mathcal{O} \left( \frac{\log n}{n} \right) 
\hspace{0.5cm}\text{a.s.}
\end{equation}
\end{thm}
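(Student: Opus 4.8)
The plan is to reduce everything to the unobserved noise sequence $(\varepsilon_n)$, to handle $\overline{\rho}_n$ by comparison with the unfeasible least squares estimator built on $(\varepsilon_n)$, to obtain the convergence of $\wh{D}_n$ directly from the ratio \eqref{DEFDN}, and finally to derive the rate for $\wh{D}_n$ through the exact identity \eqref{RELRHODN}. Plugging \eqref{ARXPCF} into \eqref{RESIDUALN} gives, for $1\le k\le n$,
$$\widehat{\varepsilon}_k=\varepsilon_k+(\theta-\wh{\theta}_n)^{t}\varphi_{k-1}=:\varepsilon_k+\delta_k.$$
Since $\varphi_{k-1}$ consists of the first $p$ coordinates of $\Phi_{k-1}$, Lemma \ref{L-CVGSN} yields $\sum_{k=0}^{n}\|\varphi_k\|^2=\mathcal{O}(n)$ a.s., and the rate \eqref{ASCVGTHETA} of Corollary \ref{C-ASCVGTHETARHO} then provides the keystone bound $\sum_{k=1}^{n}\delta_k^2\le\|\theta-\wh{\theta}_n\|^2\sum_{k=1}^{n}\|\varphi_{k-1}\|^2=\mathcal{O}(\log n)$ a.s., the same estimate holding for $\sum_{k=1}^{n}(\delta_k-\delta_{k-1})^2$ because $\|\varphi_{k-1}-\varphi_{k-2}\|^2\le 2\|\varphi_{k-1}\|^2+2\|\varphi_{k-2}\|^2$. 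Together with $\sum_{k=1}^{n}\varepsilon_k^2=\mathcal{O}(n)$ a.s. for the stable autoregression \eqref{COR}, the Cauchy--Schwarz inequality lets us replace $\widehat{\varepsilon}_k$ by $\varepsilon_k$ throughout \eqref{DEFDN} and \eqref{NEWLSRHO}, each substitution costing only a remainder of order $\mathcal{O}(\sqrt{n\log n})$.

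For $\overline{\rho}_n$, introduce the unfeasible estimator $\rho_n^{\ast}=\bigl(\sum_{k=1}^{n}\varepsilon_k\varepsilon_{k-1}\bigr)\bigl(\sum_{k=1}^{n}\varepsilon_{k-1}^2\bigr)^{-1}$. Expanding $\widehat{\varepsilon}_k\widehat{\varepsilon}_{k-1}$ and $\widehat{\varepsilon}_{k-1}^2$ in powers of $\delta_k$ and using the first paragraph, both the numerator and the denominator of $\overline{\rho}_n$ in \eqref{NEWLSRHO} differ from those of $\rho_n^{\ast}$ by $\mathcal{O}(\sqrt{n\log n})$ a.s., while both denominators are of exact order $n$ and both numerators are $\mathcal{O}(n)$; a short computation then gives $\overline{\rho}_n-\rho_n^{\ast}=\mathcal{O}(\sqrt{\log n/n})$ a.s. Finally, the strong consistency and the law of the iterated logarithm for the least squares estimator in a stable first-order autoregression (classical martingale results, see also Bercu and Proia \cite{BPDW}) give $(\rho_n^{\ast}-\rho)^2=\mathcal{O}(\log n/n)$ a.s., and \eqref{ASCVGNRHO} follows.

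For the almost sure convergence of $\wh{D}_n$ I would argue directly on \eqref{DEFDN}. Writing $\widehat{\varepsilon}_k-\widehat{\varepsilon}_{k-1}=(\rho-1)\varepsilon_{k-1}+V_k+(\delta_k-\delta_{k-1})$ and using $\sum_{k=1}^{n}(\delta_k-\delta_{k-1})^2=\mathcal{O}(\log n)$, the martingale strong law $\sum_{k=1}^{n}\varepsilon_{k-1}V_k=o(n)$ a.s., and the ergodic limits $n^{-1}\sum_{k=1}^{n}\varepsilon_{k-1}^2\to\sigma^2/(1-\rho^2)$ and $n^{-1}\sum_{k=1}^{n}V_k^2\to\sigma^2$ a.s., one obtains $n^{-1}\sum_{k=1}^{n}(\widehat{\varepsilon}_k-\widehat{\varepsilon}_{k-1})^2\to 2\sigma^2/(1+\rho)$ a.s., and the same ingredients give $n^{-1}\sum_{k=0}^{n}\widehat{\varepsilon}_k^2\to\sigma^2/(1-\rho^2)$ a.s. Taking the ratio yields $\wh{D}_n\to 2(1-\rho^2)/(1+\rho)=2(1-\rho)=D$ a.s.

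It remains to prove \eqref{ASCVGDW}, which is the only delicate point. Here I would use the exact identity \eqref{RELRHODN}, $\wh{D}_n-D=-2(\overline{\rho}_n-\rho)+\zeta_n$. Setting $A_n=\sum_{k=0}^{n}\widehat{\varepsilon}_k^2$ and using $\sum_{k=1}^{n}\widehat{\varepsilon}_k^2=A_n-\widehat{\varepsilon}_0^2$ together with $\sum_{k=1}^{n}\widehat{\varepsilon}_{k-1}^2=A_n-\widehat{\varepsilon}_n^2$, a direct rearrangement of \eqref{DEFDN} and \eqref{NEWLSRHO} shows that $|\zeta_n|\le C\,(\widehat{\varepsilon}_0^2+\widehat{\varepsilon}_n^2)/A_n$ for $n$ large enough and some constant $C$. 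Now $A_n$ is of exact order $n$ by the previous paragraph, $\widehat{\varepsilon}_0^2=X_0^2$ is bounded, and $\widehat{\varepsilon}_n^2\le 2\varepsilon_n^2+2\delta_n^2$ with $\delta_n^2\to 0$ a.s.; so everything reduces to controlling the single term $\varepsilon_n^2$. This is exactly where the conditional moment of order $>4$ is needed: if $(V_n)$ has a finite conditional moment of order $2s$ with $s>2$, then $\dE[\varepsilon_n^{2s}]$ is bounded in $n$ and a Borel--Cantelli argument gives $\varepsilon_n^2=o(n^{1/s})=o(\sqrt{n})$ a.s., whence $\widehat{\varepsilon}_n^2=o(\sqrt{n})$ and $\zeta_n=o(n^{-1/2})$, so that $\zeta_n^2=\mathcal{O}(\log n/n)$. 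Combined with \eqref{ASCVGNRHO}, this yields \eqref{ASCVGDW}. The main obstacle is thus the sharp control of the remainder $\zeta_n$ in \eqref{RELRHODN}: isolating its boundary terms $\widehat{\varepsilon}_0^2$ and $\widehat{\varepsilon}_n^2$ and estimating $\widehat{\varepsilon}_n^2$ is the only step in the whole statement that requires a conditional moment of order strictly greater than $4$ rather than just $2$.
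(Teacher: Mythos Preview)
Your argument is correct in outline and follows essentially the same route as the paper: write $\widehat{\varepsilon}_k=\varepsilon_k+\delta_k$, use Lemma~\ref{L-CVGSN} and Corollary~\ref{C-ASCVGTHETARHO} to get $\sum_k\delta_k^2=\mathcal{O}(\log n)$, reduce numerator and denominator of $\overline{\rho}_n$ to the corresponding sums in $\varepsilon_k$, control the martingale $N_n=\sum_k\varepsilon_{k-1}V_k$ by the strong law with rate, and finally isolate the boundary term $\widehat{\varepsilon}_n^{\,2}$ in $\zeta_n$. The paper does exactly this, only packaging the decomposition as $I_n-\rho J_{n-1}=N_n-Q_n+R_n$ instead of passing through your unfeasible estimator $\rho_n^{\ast}$; the two presentations are equivalent.

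There is one slip in your last paragraph. From $\sup_n\dE[\varepsilon_n^{2s}]<\infty$ and Markov you get $\dP(\varepsilon_n^2>\epsilon\,n^{1/s})\le C/(\epsilon^s n)$, which is \emph{not} summable, so Borel--Cantelli does not yield $\varepsilon_n^2=o(n^{1/s})$. What you actually need, and what does follow, is the weaker bound $\varepsilon_n^2=o(\sqrt{n})$: indeed $\dP(\varepsilon_n^2>\epsilon\sqrt{n})\le C\,n^{-s/2}$ is summable because $s>2$. So drop the intermediate claim $o(n^{1/s})$ and go straight to $o(\sqrt{n})$. Note also that passing from a conditional moment assumption on $(V_n)$ to a uniform bound on $\dE[\varepsilon_n^{2s}]$ requires a martingale moment inequality (Burkholder or Rosenthal) that you should cite. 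The paper sidesteps both issues by the pathwise estimate
\[
|\varepsilon_n|\le\frac{1}{1-|\rho|}\Bigl(|\varepsilon_0|+\sup_{1\le k\le n}|V_k|\Bigr)
\]
together with the fact (Corollary~1.3.21 in \cite{Duflo}) that $\sup_{1\le k\le n}|V_k|=o(n^{1/b})$ a.s.\ for any $b$ below the conditional moment order; with $b=4$ this gives $\sup_k\varepsilon_k^2=o(\sqrt{n})$ directly, without any unconditional moment computation.
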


Our next result deals with the asymptotic normality of the Durbin-Watson statistic.
For that purpose, it is necessary to introduce some notations. Denote 
\begin{equation}
\label{DEFALPHABETA}
\alpha 
= \begin{pmatrix}
\  1 \ \\
-\theta_{1} \ \\
\vdots \\
-\theta_{p} \ \\
\!\!-1
 \vspace{1ex}
\end{pmatrix}
\hspace{1cm} \text{and} \hspace{1cm} 
\beta 
= \begin{pmatrix}
  1 \\
 \rho  \\
\vdots \\
\rho^{p-1}  \\
 0
 \vspace{1ex}
\end{pmatrix}.
\end{equation}
In addition, let 
\begin{equation}
\label{DEFGAMMA}
\gamma= \Lambda \alpha + (1- \rho^2) \nabla^{t} \beta.
\end{equation}

\begin{thm}
\label{T-CLTDW}
Assume that the serial correlation parameter $\rho \! \neq\! 0$ and that $(V_n)$ has a finite conditional moment of order $>2$. 
In addition, suppose that $(x_n)$ has the same almost sure regularity as $(V_n)$.
Then, we have
\begin{equation}
\label{CLTRHOO}
\sqrt{n}(\overline{\rho}_{n}-\rho)\liml
\cN\left(0,\tau^2\right)
\end{equation}
where the asymptotic variance $\tau^2= (1 - \rho^2)^2 \gamma^{t}  \Lambda^{-1} \gamma$.
Moreover, if $(V_n)$ has a finite conditional moment of order $>4$, we also have
\begin{equation}
\label{CLTDW}
\sqrt{n}(\wh{D}_{n}-D)\liml
\cN\left(0,4\tau^2\right)
\end{equation}
\end{thm}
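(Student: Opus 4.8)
The plan is to derive the central limit theorem for $\overline{\rho}_n$ by writing it as a ratio whose numerator, after centering, is a martingale, and then transfer the result to $\wh D_n$ through the linear relation \eqref{RELRHODN}. First I would introduce the "true" residuals $\varepsilon_k$ and compare $\widehat{\varepsilon}_k$ with $\varepsilon_k$; from \eqref{RESIDUALN} one has $\widehat{\varepsilon}_k - \varepsilon_k = (\theta - \wh\theta_n)^t \varphi_{k-1}$ plus the control term, so Theorem \ref{T-ASCVGVARTHETA} and Corollary \ref{C-ASCVGTHETARHO} control this difference at rate $\cO(\log n / n)$. Writing $\overline{\rho}_n - \rho = \bigl(\sum_{k=1}^n \varepsilon_{k-1} V_k\bigr)\big/\bigl(\sum_{k=1}^n \varepsilon_{k-1}^2\bigr)$ modulo remainder terms, the denominator divided by $n$ converges a.s.\ to $\sigma^2/(1-\rho^2)$, while $\sum_{k=1}^n \varepsilon_{k-1} V_k$ is a martingale. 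However, because $\wh\theta_n$ depends on the whole trajectory, the correct object is not simply $\sum \varepsilon_{k-1}V_k$ but an expression built from $\sqrt{n}(\wh\vartheta_n - \vartheta)$; this is where the vectors $\alpha$, $\beta$ and $\gamma$ enter.

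The key algebraic step is to linearize $\overline{\rho}_n - \rho$ in terms of $\wh\vartheta_n - \vartheta$. Substituting the expansions of $\widehat{\varepsilon}_k$ and $\widehat{\varepsilon}_{k-1}$ into \eqref{NEWLSRHO}, using $\varepsilon_k = \rho \varepsilon_{k-1} + V_k$, and collecting terms by powers of $(\wh\vartheta_n - \vartheta)$, I expect to obtain
\begin{equation*}
\sqrt{n}(\overline{\rho}_n - \rho) = (1-\rho^2)\,\gamma^t \sqrt{n}(\wh\vartheta_n - \vartheta) + o_{\dP}(1),
\end{equation*}
where $\gamma = \Lambda \alpha + (1-\rho^2)\nabla^t \beta$ records the two sources of dependence: the $\Lambda\alpha$ piece comes from replacing $\varphi_{k-1}$-combinations by their limiting second moments encoded in $\Lambda$, and the $(1-\rho^2)\nabla^t\beta$ piece comes from the relation \eqref{LSTHETARHO}--\eqref{DEFNABLA} between $\wh\theta_n$ and $\wh\vartheta_n$ through $\wh\Delta_n$. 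Once this identity is established, \eqref{CLTRHOO} with $\tau^2 = (1-\rho^2)^2 \gamma^t \Lambda^{-1}\gamma$ follows immediately from Theorem \ref{T-CLTVARTHETA} and the continuous mapping / Slutsky argument. For \eqref{CLTDW}, I would apply \eqref{RELRHODN}: since $\zeta_n = \cO(1/n)$ a.s.\ (the remainder collects boundary terms $\widehat{\varepsilon}_0^2$, $\widehat{\varepsilon}_n^2$ divided by $\sum \widehat{\varepsilon}_k^2 \sim cn$), we get $\sqrt{n}(\wh D_n - D) = -2\sqrt{n}(\overline{\rho}_n - \rho) + o_{\dP}(1)$, hence the asymptotic variance $4\tau^2$.

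The main obstacle will be controlling the remainder terms in the linearization rigorously. The difference $\widehat{\varepsilon}_k - \varepsilon_k$ is of order $\|\wh\theta_n - \theta\|\cdot\|\varphi_{k-1}\|$, and when summed against $\varepsilon_{k-1}$ or $V_k$ over $k \le n$ this must be shown to be $o(\sqrt{n})$; this requires combining the $\cO(\log n/n)$ rate from Corollary \ref{C-ASCVGTHETARHO} with a bound of the form $\sum_{k=1}^n \|\varphi_{k-1}\|^2 = \cO(n)$ a.s., which itself follows from Lemma \ref{L-CVGSN}. The cross terms that are quadratic in $(\wh\theta_n - \theta)$ are then $\cO(\log n/\sqrt{n}) = o_\dP(1)$ and cause no trouble, but the linear cross term must be handled carefully because $\wh\theta_n - \theta$ is correlated with the martingale increments. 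The strengthened moment hypothesis (order $>4$) is needed precisely here: to establish the a.s.\ convergence of $\frac{1}{n}\sum_{k=1}^n \varepsilon_{k-1}\varphi_{k-1}$ and similar averages, and to invoke a martingale CLT with Lindeberg-type control on $\sum \varepsilon_{k-1}V_k$, one needs more than two moments on $V_n$ (and on $\varepsilon_n^2 V_n$, which carries four powers of the innovations). All the heavy computations — the exact identification of $\gamma$, the verification that the Lindeberg condition holds, and the bookkeeping of remainders — I would relegate to Appendix\,B, mirroring the structure already used for the earlier results. \demend
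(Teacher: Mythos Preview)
Your strategy differs from the paper's. The paper does \emph{not} reduce $\sqrt{n}(\overline{\rho}_n-\rho)$ to a linear functional of $\sqrt{n}(\wh\vartheta_n-\vartheta)$ and then invoke Theorem~\ref{T-CLTVARTHETA}. Instead, starting from $J_{n-1}(\overline{\rho}_n-\rho)=N_n-C_n^{t}M_n+R_n$ with $N_n=\sum_{k}\varepsilon_{k-1}V_k$ and $M_n=\sum_k\Phi_{k-1}V_k$, it forms the $(p+3)$-dimensional martingale $Z_n=(M_n^t,N_n)^t$, proves a multidimensional CLT for $Z_n$ with a limiting covariance $\cZ$ that is explicitly noted to be \emph{singular}, and then obtains \eqref{CLTRHOO} via Slutsky applied to $\cA_nZ_n/\sqrt{n}$. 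Your route is more economical, but it hinges on a fact you never state: the \emph{exact} identity $\varepsilon_{k-1}=\alpha^{t}\Phi_{k-1}$ (immediate from \eqref{ARXPCF}), which gives $N_n=\alpha^{t}M_n$ and hence $N_n/\sqrt{n}=\alpha^{t}(S_{n-1}/n)\,\sqrt{n}(\wh\vartheta_n-\vartheta)$. Without this identity, $N_n/\sqrt{n}$ is $\cO_{\dP}(1)$ and cannot be pushed into the $o_{\dP}(1)$ remainder; Theorem~\ref{T-CLTVARTHETA} alone is then insufficient because it says nothing about the \emph{joint} law of $\wh\vartheta_n$ and $N_n$. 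Your heuristic for the $\Lambda\alpha$ contribution (``replacing $\varphi_{k-1}$-combinations by limiting second moments'') does not supply this step, so as written there is a genuine gap at precisely the point where the paper resorts to the joint martingale CLT. Once you insert $\varepsilon_{k-1}=\alpha^{t}\Phi_{k-1}$, your linearization does go through and is arguably cleaner; the singularity of $\cZ$ in the paper's argument is exactly the footprint of this redundancy.

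Two smaller corrections. The moment condition of order $>4$ is not used where you claim: \eqref{CLTRHOO} requires only order $>2$ (as the theorem statement makes explicit), and the extra moments enter only for \eqref{CLTDW}, to obtain $\sup_{k\le n}\wh\varepsilon_k^{\,2}=o(\sqrt{n})$ a.s.\ so that $\sqrt{n}\,\zeta_n\to 0$. Relatedly, your estimate $\zeta_n=\cO(1/n)$ is too strong: $\wh\varepsilon_n^{\,2}$ is not bounded, and the correct bound under order $>4$ is only $\zeta_n=o(n^{-1/2})$ a.s., which is exactly what is needed.
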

\noindent{\bf Proof.}
The proofs are given in Appendix\,C. \demend

\vspace{-2ex} 
\begin{rem}
It follows from \eqref{INVLAMBDA} together with tedious but straighforward calculations that
for all $p\geq 1$,
\begin{eqnarray}
\label{EXPTAU}
\tau^2 &=& \frac{(1-\rho^2)}{\rho^{2(p+1)}}
\bigg[\rho^{2(p+1)} \Bigl(4-(4p+3)\rho^{2p} + 4p \rho^{2(p+1)} - \rho^{2(2p+1)}\Bigr) \notag \\
& & \hspace{1cm} + \Bigl(1 - (p+1)\rho^{2p} + (p-1) \rho^{2(p+1)}\Bigr)^2\bigg].
\end{eqnarray}
For example, in the particular case $p=1$, we obtain that
\begin{equation}
\label{EXPTAU1}
\tau^2 = \frac{(1-\rho^2)}{\rho^{4}}
\bigg(1 - 4 \rho^2 + 8 \rho^{4} -7 \rho^{6} +4 \rho^{8}- \rho^{10}\bigg).
\end{equation}
Moreover, it is not hard to see by a convexity argument that we always have for all $p\geq 1$,
$$
\tau^2 \leq \frac{1-\rho^2}{\rho^{2(p+1)}}.
$$
In other words, the  least squares estimator $\overline{\rho}_n$ performs better than $\wh{\rho}_n$ for the estimation
of $\rho$. It means that a statistical test procedure built on the Durbin-Watson statistic should be really powerful.
\end{rem}

We are now in the position to propose our new  bilateral statistical test built on the Durbin-Watson statistic $\wh{D}_n$. First of all, 
we shall not investigate the case $\rho=0$ since our approach is only of interest for ARX processes
where the driven noise is given by a first-order autoregressive process. For a given value $\rho_0$ such that $|\rho_0|< 1$
and $\rho_0 \neq 0$, we wish to test whether or not the serial correlation parameter is equal to $\rho_0$. It means that
we wish to test

$$ \cH_0\,:\,`` \rho = \rho_0" \hspace{1cm} \text{against}\hspace{1cm}\cH_1\,:\,`` \rho \neq \rho_0".
\vspace{2ex} $$
According to Theorem \ref{T-ASCVGDW}, we have under the null hypothesis $\cH_0$
$$
\lim_{n \rightarrow \infty} \wh{D}_n=D_{0}
\hspace{1cm} \text{a.s.}
$$
where $D_0=2(1-\rho_0)$. In addition, we clearly have from \eqref{CLTDW} that under $\cH_0$
\begin{equation}
\label{CHICLTDW}
\frac{n}{4\tau^2} \left( \wh{D}_{n} - D_{0} \right)^2 \liml \chi^2
\end{equation}
where $\chi^2$ stands for a Chi-square distribution with one degree of freedom. Via \eqref{EXPTAU},
an efficient strategy to estimate the asymptotic variance $\tau^2$ is to make use of
\begin{eqnarray}
\label{DEFGAMMATAUNEW}
\wh{\tau}_n^{\,2} &=& \frac{(1-\overline{\rho}_n^{\,2})}{\overline{\rho}_n^{\,2(p+1)}}
\bigg[\overline{\rho}_n^{\,2(p+1)} \Bigl(4-(4p+3)\overline{\rho}_n^{\,2p} + 4p \overline{\rho}_n^{\,2(p+1)} - \overline{\rho}_n^{\,2(2p+1)}\Bigr) \notag \\
& & \hspace{1cm} + \Bigl(1 - (p+1)\overline{\rho}_n^{\,2p} + (p-1) \overline{\rho}_n^{\,2(p+1)}\Bigr)^2\bigg].
\end{eqnarray}
Therefore, our new  bilateral statistical test relies on the following result.

\begin{thm}
\label{T-DWTEST}
Assume that the serial correlation parameter $\rho \! \neq\! 0$ and that $(V_n)$ has a finite conditional moment of order $>4$. 
In addition, suppose that $(x_n)$ has the same almost sure regularity as $(V_n)$.
Then, under the null hypothesis $\cH_0\,:\,`` \rho = \rho_0"$,
\begin{equation}
\label{DWTESTH0}
\frac{n}{4\wh{\tau}_n^{\, 2}} \left( \wh{D}_{n} - D_{0} \right)^2 \liml \chi^2
\end{equation}
where $\chi^2$ stands for a Chi-square distribution with one degree of freedom. In addition, 
under the alternative hypothesis $\cH_1\,:\,`` \rho \neq \rho_0"$,
\begin{equation}
\label{DWTESTH1}
\lim_{n\rightarrow \infty}  \frac{n}{4\wh{\tau}_n^{\, 2}} \left( \wh{D}_{n} - D_{0}  \right)^2 = + \infty \hspace{1cm} \textnormal{a.s.}
\end{equation}
\end{thm}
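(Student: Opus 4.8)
The plan is to deduce Theorem~\ref{T-DWTEST} directly from Theorems~\ref{T-ASCVGDW} and~\ref{T-CLTDW} by a standard Slutsky argument, the only real work being the consistency of the variance estimator $\wh{\tau}_n^{\,2}$. First I would observe that $\wh{\tau}_n^{\,2}$ is obtained from the closed-form expression \eqref{EXPTAU} for $\tau^2$ simply by replacing $\rho$ with $\overline{\rho}_n$; that is, $\wh{\tau}_n^{\,2} = g(\overline{\rho}_n)$ and $\tau^2 = g(\rho)$, where $g$ is the rational function
\begin{equation*}
g(r) = \frac{1-r^2}{r^{2(p+1)}}\Bigl[r^{2(p+1)}\bigl(4-(4p+3)r^{2p}+4p\,r^{2(p+1)}-r^{2(2p+1)}\bigr) + \bigl(1-(p+1)r^{2p}+(p-1)r^{2(p+1)}\bigr)^2\Bigr].
\end{equation*}
Since $\rho \neq 0$ and $|\rho|<1$, the function $g$ is continuous on a neighbourhood of $\rho$ (the only singularity is at $r=0$, which is excluded), and $g(\rho)=\tau^2>0$ by the remark following Theorem~\ref{T-CLTDW}. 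By Theorem~\ref{T-ASCVGDW}, $\overline{\rho}_n \to \rho$ a.s., so the continuous mapping theorem gives $\wh{\tau}_n^{\,2} = g(\overline{\rho}_n) \to g(\rho) = \tau^2$ a.s. In particular $\wh{\tau}_n^{\,2}>0$ for $n$ large enough a.s., so the ratio \eqref{DWTESTH0} is well defined eventually.

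For the assertion under $\cH_0$, I would write
\begin{equation*}
\frac{n}{4\wh{\tau}_n^{\,2}}\bigl(\wh{D}_n-D_0\bigr)^2 = \frac{\tau^2}{\wh{\tau}_n^{\,2}}\cdot\frac{n}{4\tau^2}\bigl(\wh{D}_n-D_0\bigr)^2.
\end{equation*}
Under $\cH_0$ we have $D_0 = 2(1-\rho_0) = 2(1-\rho) = D$, so \eqref{CHICLTDW}, which is the specialization of \eqref{CLTDW} to the null, shows the second factor converges in law to $\chi^2$. The first factor tends to $1$ a.s. by the consistency just established. Slutsky's lemma then yields \eqref{DWTESTH0}. (One needs the moment hypothesis of order $>4$ here, exactly as in Theorem~\ref{T-CLTDW}; the hypotheses of Theorem~\ref{T-DWTEST} supply this as well as the regularity of $(x_n)$.)

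For the assertion under $\cH_1$, note that $\rho \neq \rho_0$ forces $D = 2(1-\rho) \neq 2(1-\rho_0) = D_0$, so $\wh{D}_n \to D \neq D_0$ a.s.\ by Theorem~\ref{T-ASCVGDW}, hence $(\wh{D}_n - D_0)^2 \to (D-D_0)^2 > 0$ a.s. Meanwhile $\wh{\tau}_n^{\,2} \to \tau^2 \in (0,\infty)$ a.s.\ as above (the consistency argument does not use $\cH_0$). Therefore
\begin{equation*}
\frac{n}{4\wh{\tau}_n^{\,2}}\bigl(\wh{D}_n-D_0\bigr)^2 \sim \frac{(D-D_0)^2}{4\tau^2}\,n \longrightarrow +\infty \hspace{1cm}\text{a.s.},
\end{equation*}
which is \eqref{DWTESTH1}. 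The only point requiring a modicum of care, hence the ``main obstacle'', is verifying that $g$ is continuous and nonvanishing at $\rho$ — i.e.\ that $\tau^2>0$ so that division by $\wh{\tau}_n^{\,2}$ is legitimate for large $n$; this is guaranteed by the strict positivity of $\tau^2$ noted in the remark, together with $\rho\neq 0$. Everything else is a routine combination of almost sure convergence, the continuous mapping theorem, and Slutsky's lemma. \demend
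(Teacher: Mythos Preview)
Your proposal is correct and follows essentially the same route as the paper: consistency of $\wh{\tau}_n^{\,2}$ via the continuous mapping theorem applied to $\overline{\rho}_n\to\rho$, then Slutsky's lemma combined with \eqref{CLTDW} under $\cH_0$, and almost sure divergence under $\cH_1$. Your treatment of $\cH_1$ is in fact slightly more direct than the paper's (you use $\wh{D}_n\to D\neq D_0$ straight from Theorem~\ref{T-ASCVGDW}, whereas the paper passes through $\overline{\rho}_n-\rho_0$ and the decomposition \eqref{DECODDW}); one small caveat is that the remark after Theorem~\ref{T-CLTDW} gives only an \emph{upper} bound for $\tau^2$, not strict positivity, so your citation there does not quite establish $\tau^2>0$---though the paper's own proof is equally silent on this point.
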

\noindent{\bf Proof.}
The proof is given in Appendix\,C. \demend

From a practical point of view, for a significance level $\alpha$ where $0<\alpha <1$, the acceptance and rejection regions are given by 
$\cA= [0, a_{\alpha}]$ and $\cR =] a_{\alpha}, +\infty [$
where $a_{\alpha}$ stands for the $(1-\alpha)$-quantile of the Chi-square distribution with one degree of freedom.
The null hypothesis $\cH_0$ will be accepted if 
$$
\frac{n}{4\wh{\tau}_n^{\, 2}} \left( \wh{D}_{n} - D_{0} \right)^2 \leq a_{\alpha},
$$
and will be rejected otherwise.

\section{Numerical Experiments}
\label{SectionNS}

The purpose of this section is to provide some numerical experiments in order to illustrate our
main theoretical results. In order to keep this section brief, we shall only consider the  $\text{ARX}(p,0)$
process $(X_n)$ given by (\ref{ARXP}) in the particular cases $p=1$ and $p=2$,
where the driven noise $(\varepsilon_n)$ satisfies (\ref{COR}). Moreover, for the sake of simplicity, the reference 
trajectory $(x_n)$ is chosen to be identically zero and $(V_n)$ is a Gaussian white noise with $\mathcal{N}(0,1)$ distribution. 
Finally our numerical simulations are based on $500$ realizations of sample size $N=1000$.
First of all, consider the $ARX(1,0)$ process given, for all $n \geq 1$, by
\begin{equation}
\label{ARX1}
X_{n+1}=\theta X_n+U_n+\varepsilon_{n+1}
\hspace{0.5cm}\text{and}\hspace{0.5cm}
\varepsilon_{n+1}=\rho \varepsilon_{n}+V_{n+1}
\end{equation}
where we have chosen $\theta=8/5$ and $\rho=-4/5$ which implies
that $D=18/5$ and the Schur complement $S=16^2/15^2$. This choice has been made in 
order to obtain simple expressions for the matrices $\Lambda$ and $\Sigma$.
One can easily see from \eqref{DEFK} to \eqref{INVLAMBDA} that 
\begin{equation*}
\Lambda=\frac{1}{45}
\begin{pmatrix}
45 & 0 & 0 \\
0 &45 & \!-36\\
0& \!-36 & 80
\end{pmatrix}
\end{equation*}
as well as
\begin{equation*}
\Sigma = \nabla \Lambda^{-1} \nabla^{t}=
\begin{pmatrix}
1&0 \\
0&0
\end{pmatrix} +
\left( \frac{15}{16} \right)^2
\begin{pmatrix}  1&\!-1 \\ \!-1&1 \end{pmatrix}.
\end{equation*}
Figure 1
illustrates the almost sure convergence 
of $\wh{\theta}_{n}$, $\wh{\rho}_n$, $\overline{\rho}_n$
and $\wh{D}_n$. One can see that the almost sure convergence
is very satisfactory.

\begin{figure}[H] 
\vspace{-2cm}
\begin{center}
\includegraphics[width=14cm,height=8cm]{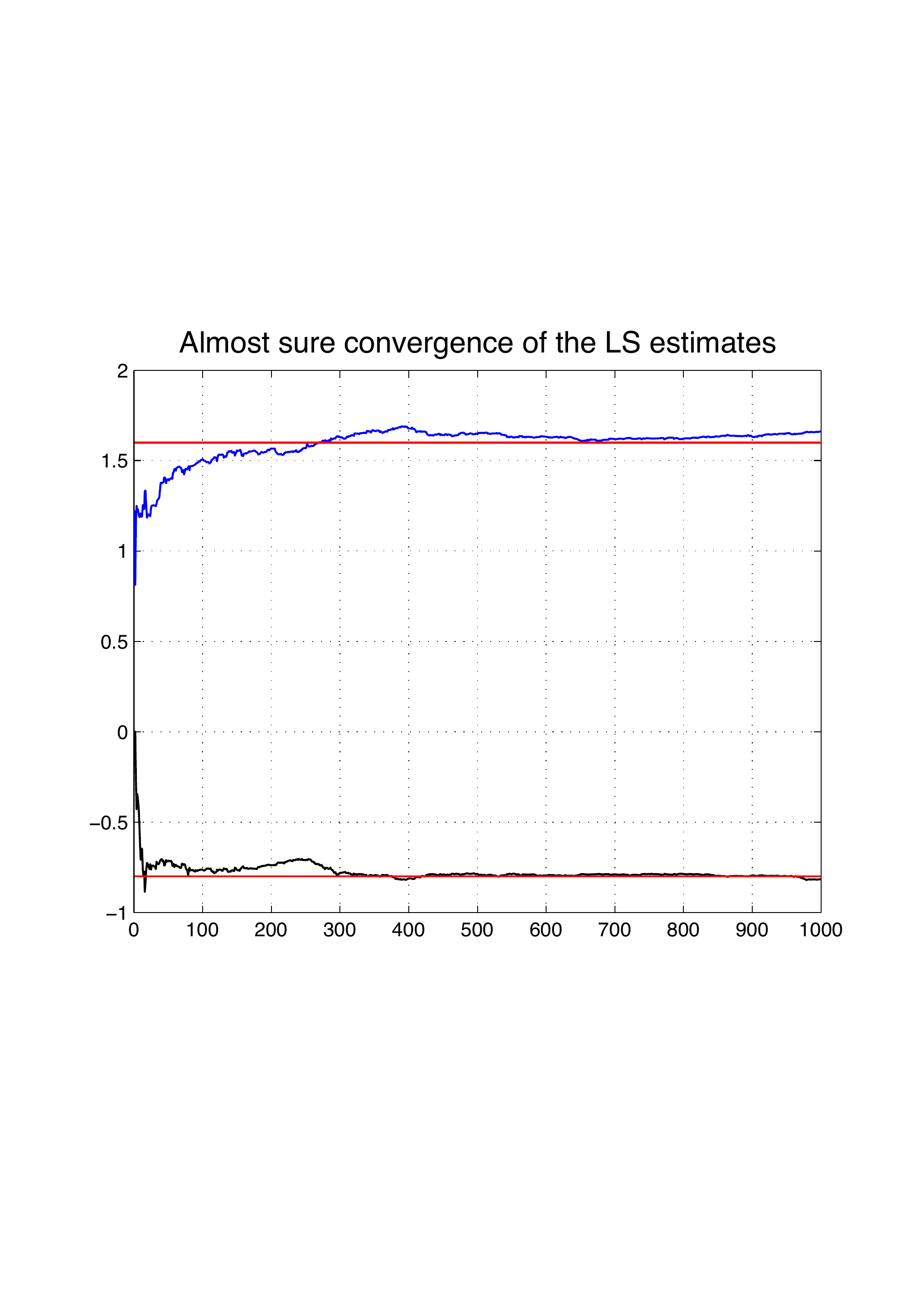}
\end{center}
\par
\vspace{-2cm}
\end{figure}
\begin{figure}[H] 
\label{FigASCVG1}
\begin{center}
\includegraphics[width=14cm,height=8cm]{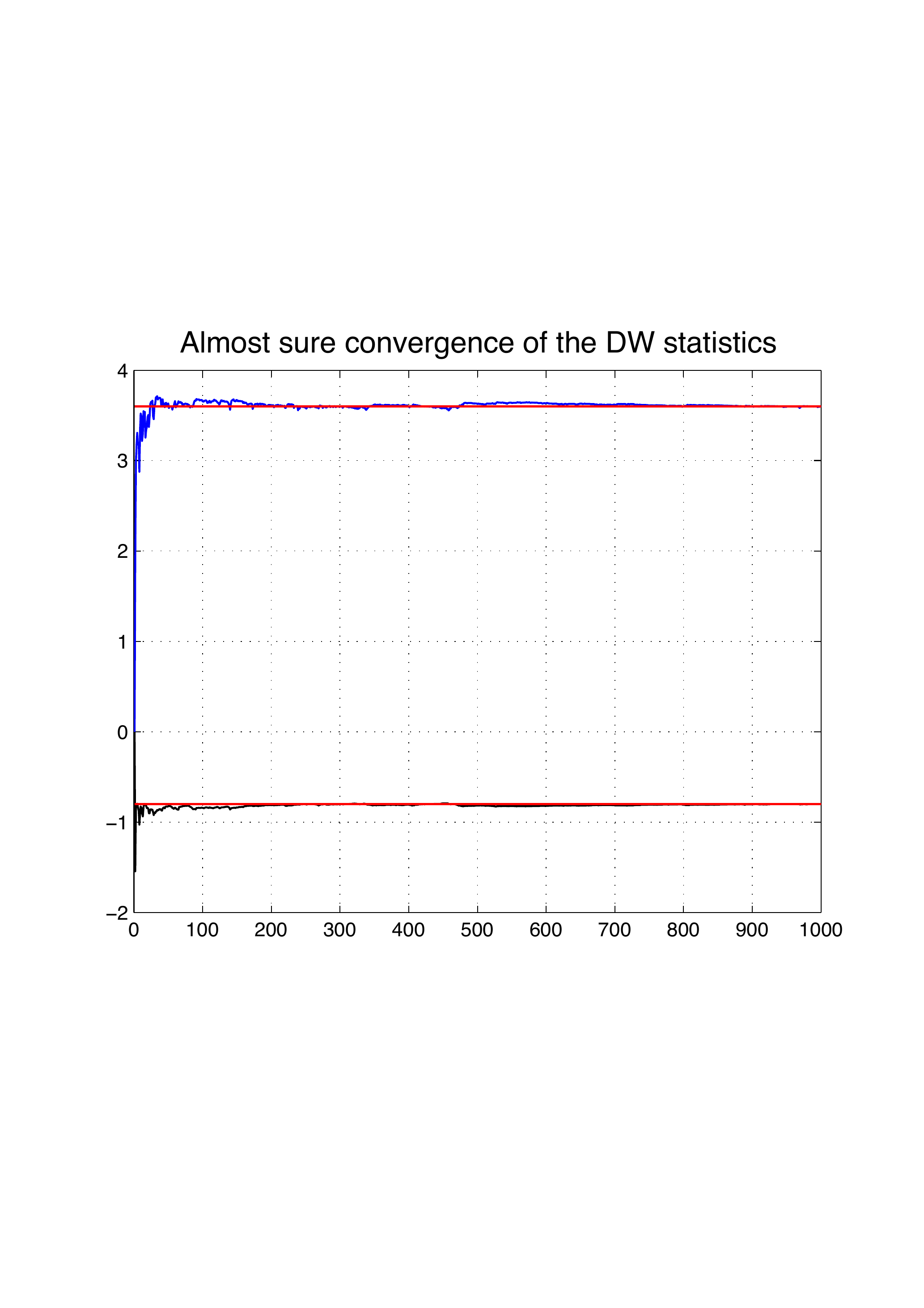}
\end{center}
\vspace{-2cm}
\caption{Almost sure convergence in the particular case $p=1$.}
\vspace{-0.5cm}
\end{figure}

\noindent
We shall now focus our attention to the asymptotic normality. We compare the empirical distributions of the LS estimates
\begin{equation*}
\frac{\sqrt{nS}}{\sqrt{1+S}} \Bigl(\wh{\theta}_{n}-\theta\Bigr)
\hspace{1cm}\text{and}\hspace{1cm}
\sqrt{n S}\Bigl(\wh{\rho}_n - \rho\Bigr)
\end{equation*}
with the standard $\cN(0,1)$ distribution. We proceed in the same way for the Durbin-Watson statistics
\begin{equation*}
\frac{\sqrt{n}}{\tau} \Bigl(\overline{\rho}_n - \rho\Bigr)
\hspace{1cm}\text{and}\hspace{1cm}
\frac{\sqrt{n}}{2\tau}\Bigl(\wh{D}_n-D\Bigr)
\end{equation*}
where $\tau^2$ is given by \eqref{EXPTAU1}.
We use the natural estimates of $S$ and $\tau^2$ by replacing $\rho$ by $\wh{\rho}_n$ and $\overline{\rho}_n$, respectively.
One can see in Figure 2
that the approximation by a standard
$\cN(0,1)$ distribution performs pretty well. 
These results are very promising in order to built a statistical test based on these
statistics.
\begin{figure}[H] 
\label{FigCLT1}
\vspace{-2cm}
\begin{center}
\begin{tabular}{cc}
\includegraphics[width=7cm,height=11cm]{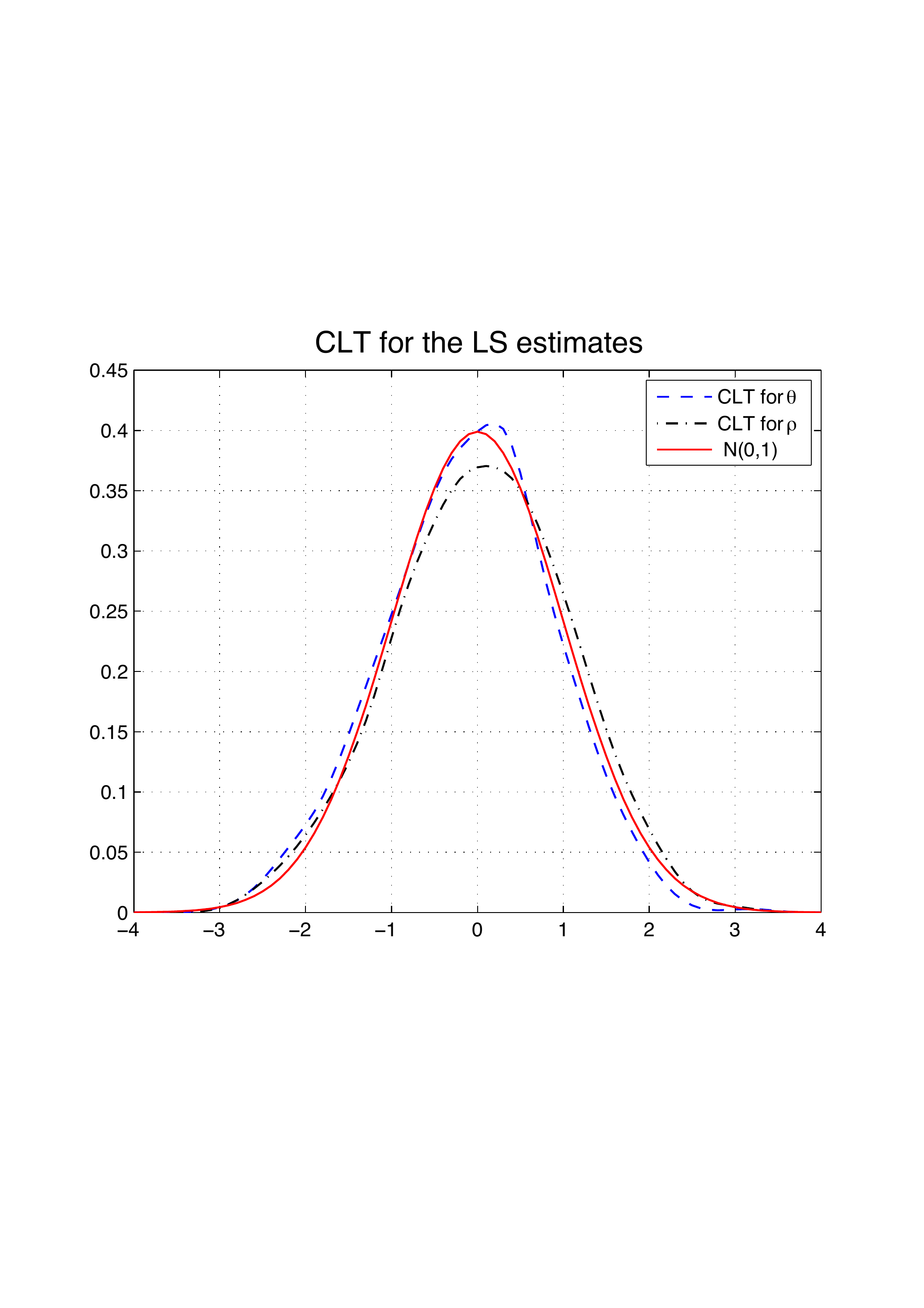}
&
\includegraphics[width=7cm,height=11cm]{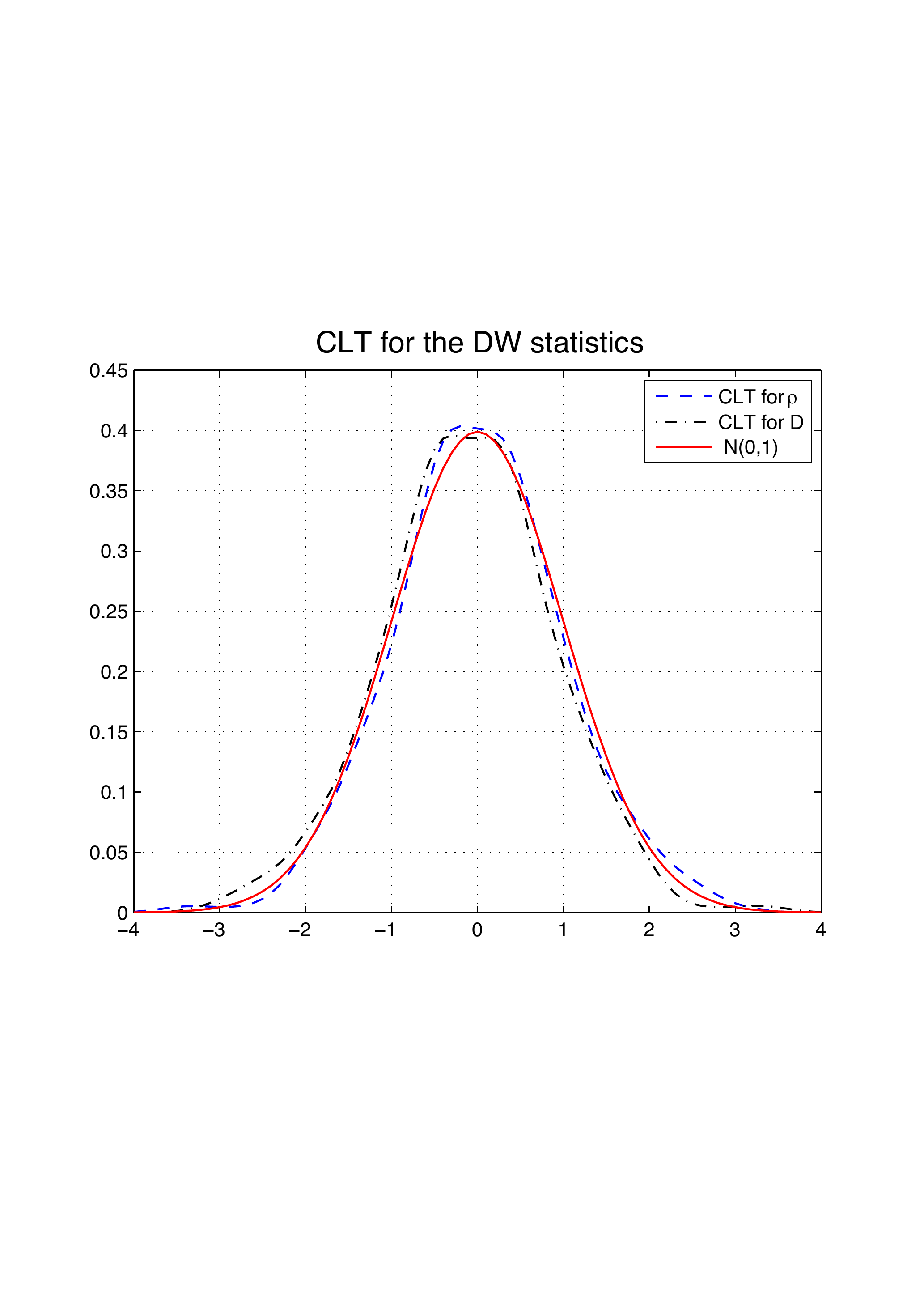}
\end{tabular}
\end{center}
\vspace{-2cm}
\caption{Asymptotic normality in the particular case $p=1$.}
\vspace{-0.5cm}
\end{figure}

\noindent
Next, we are interested in the $ARX(2,0)$ process given, for all $n \geq 1$, by
\begin{equation}
\label{ARX2}
X_{n+1}=\theta_1 X_n+\theta_2 X_{n-1}+U_n+\varepsilon_{n+1}
\hspace{0.5cm}\text{and}\hspace{0.5cm}
\varepsilon_{n+1}=\rho \varepsilon_{n}+V_{n+1}
\end{equation}
where we have chosen $\theta_1=1$,  $\theta_2=4/5$ and $\rho=-9/10$ which leads to
$D=19/5$ and 
$S=9^6/(19\!\times\!10^6)$.
It follows from \eqref{DEFK} to \eqref{INVLAMBDA} that 
\begin{equation*}
\Lambda=\frac{1}{9500}
\begin{pmatrix}
9500 & 0 & 0 & 0 \\
0 & 9500 & 0 & \!-950\\
0 & 0 & 9500 & \!-15295\\
0 & \!-950 & \!-1529 & 51292
\end{pmatrix}.
\end{equation*}
In addition, the diagonal entries of the covariance matrix $\Sigma = \nabla \Lambda^{-1} \nabla^{t}$ are respectively
given by
\begin{equation*}
1+\frac{1}{S}= \frac{721441}{531441}, \hspace{1cm}
1+\rho^2 + \frac{4 \rho^{2} }{S} = \frac{1947541}{656100}, \hspace{1cm}
\frac{1}{S} = \frac{190000}{531441}.
\end{equation*}
Figure 3
shows the almost sure convergence 
of $\wh{\theta}_{n,1}$, $\wh{\theta}_{n,2}$, $\wh{\rho}_n$, $\overline{\rho}_n$
and $\wh{D}_n$ while 
Figure 4
illustrates their asymptotic normality. As in the case $p=1$, 
one can observe that the approximation by a standard 
$\cN(0,1)$ distribution works pretty well.

\begin{figure}[H] 
\vspace{-2cm}
\begin{center}
\includegraphics[width=14cm,height=8cm]{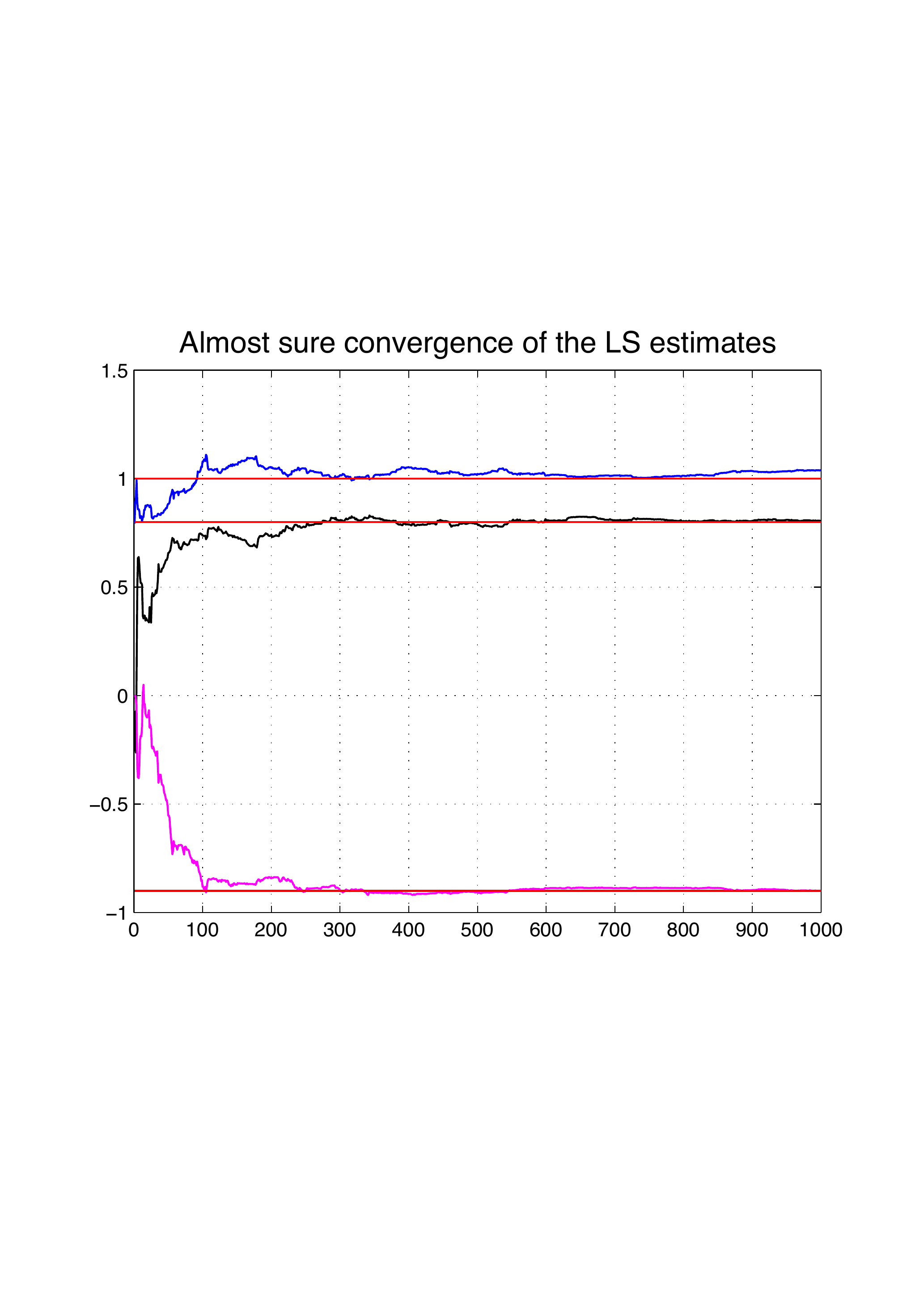}
\end{center}
\vspace{-4cm}
\end{figure}
\begin{figure}[H] 
\label{FigASCVG2}
\begin{center}
\includegraphics[width=14cm,height=8cm]{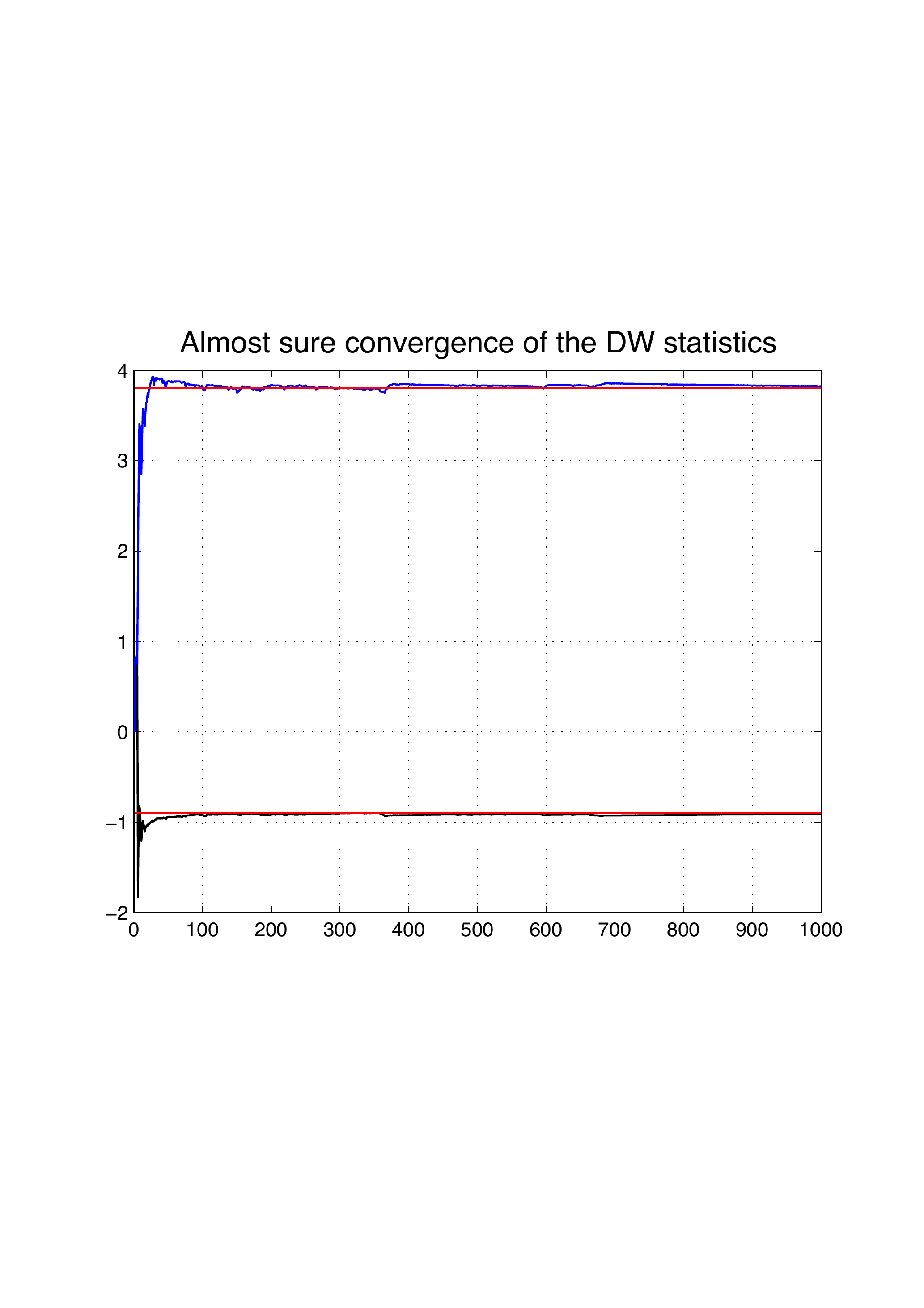}
\end{center}
\vspace{-2cm}
\caption{Almost sure convergence in the particular case $p=2$.}
\vspace{-0.5cm}
\end{figure}

\begin{figure}[H] 
\label{FigCLT2}
\vspace{-2cm}
\begin{center}
\begin{tabular}{cc}
\includegraphics[width=7cm,height=11cm]{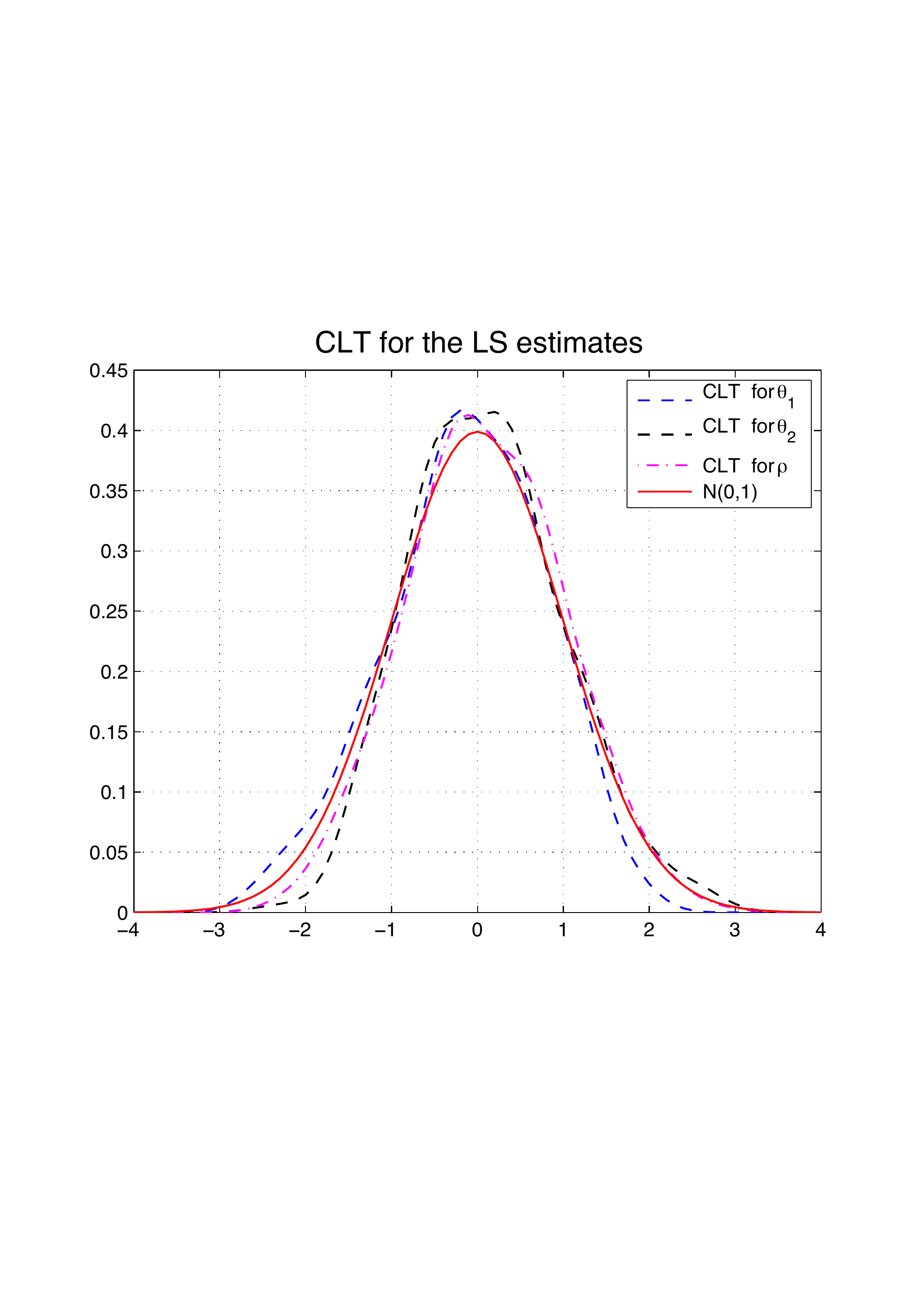}
&
\includegraphics[width=7cm,height=11cm]{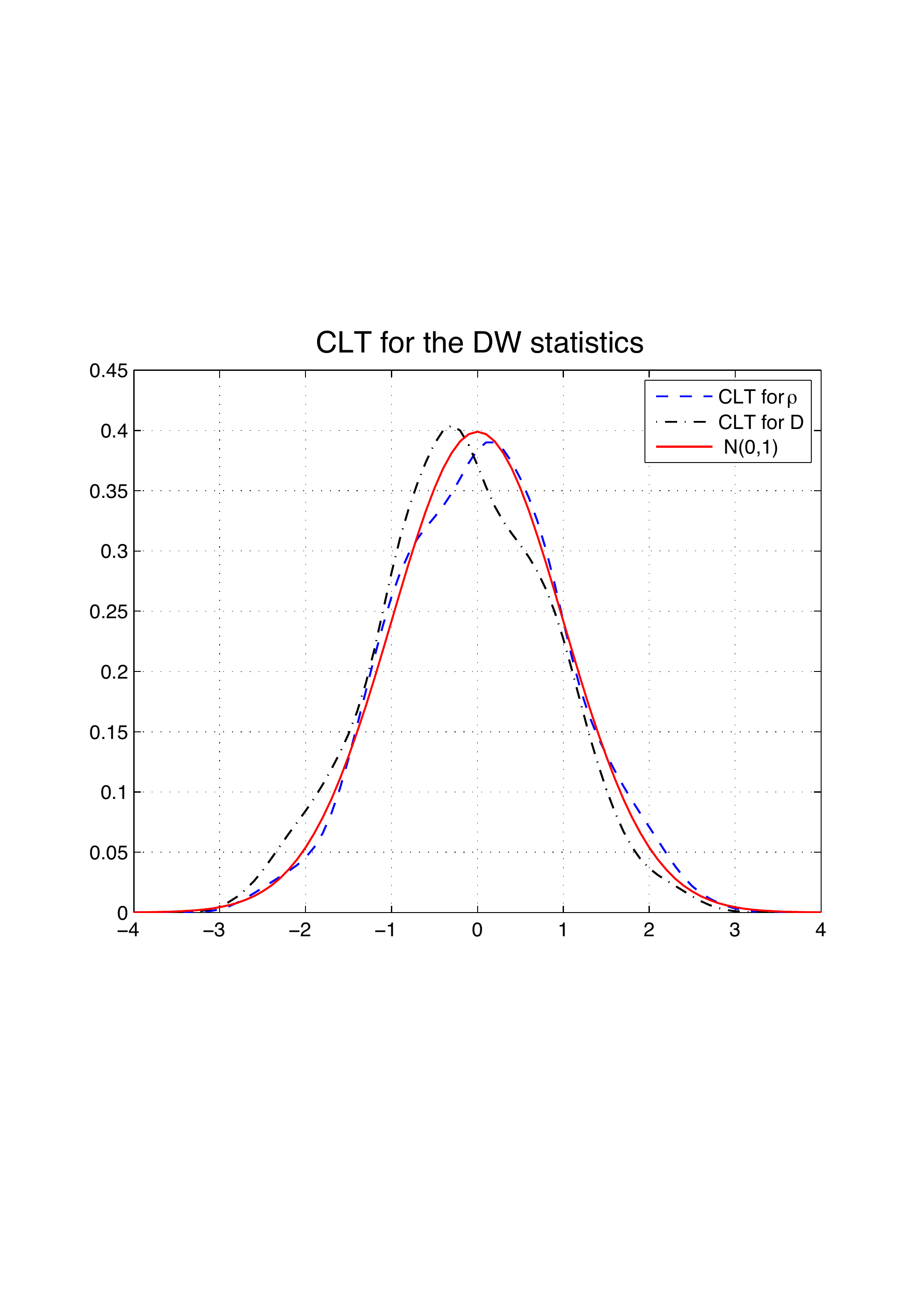}
\end{tabular}
\end{center}
\vspace{-2cm}
\caption{Asymptotic normality in the particular case $p=2$.}
\end{figure}

We shall achieve this section by illustrating the behavior of the Durbin-Watson statistical test.
We wish to test $\cH_0 : ``\rho = \rho_0"$ against $\cH_1 : ``\rho \not= \rho_0"$ 
at $5\%$ level of significance for the ARX processes given by \eqref{ARX1} and \eqref{ARX2}.
More precisely, we compute the frequency for which $\cH_0$ is rejected for different values of $\rho_0$,
\begin{equation*}
\dP\big( \text{rejecting } \cH_0 \hspace{0.1cm} \vert \hspace{0.1cm} \cH_1 \text{ is true} \big)
\end{equation*}
via $500$ realizations of different sample sizes $N=50, 100$ and $1000$.
In Tables 1 and 2, one can appreciate the empirical power of the statistical test
which means that the Durbin-Watson statistic performs very well.

\medskip

\begin{center}
\tiny
\begin{tabular}{|c|c|c|c|c|c|c|c|c|c|c|c|c|}
\hline
\multirow{2}{*}{DW} & \multicolumn{12}{c|}{Values of $\rho_0$} \\
\cline{2-13}
& $-0.9$ & $-0.8$ & $-0.7$ & $-0.6$ & $-0.4$ & $-0.2$ & $0.2$ & $0.4$ & $0.6$ & $0.7$ & $0.8$ & $0.9$  \\
\hline
\multirow{2}{*}{N=50} & 0.20 & 0.02 & 0.12 & 0.38 & 0.79 & 0.95 & 0.99 & 0.99 & 0.99 & 0.99 & 1.00 & 1.00 \\
 & (0.80) & (0.98) & (0.88) & (0.62) & (0.21) & (0.05) & (0.01) & (0.01) & (0.01) & (0.01) & (0.00) & (0.00) \\
\hline
\multirow{2}{*}{N=100} & 0.51 & 0.03 & 0.25 & 0.66 & 0.97 & 0.99 & 1.00 & 1.00 & 1.00 & 1.00 &1.00 & 1.00 \\
 & (0.49) & (0.97) & (0.75) & (0.34) & (0.03) & (0.01) & (0.00) & (0.00) &  (0.00) & (0.00) &  (0.00) & (0.00) \\
\hline
\multirow{2}{*}{N=1000} & 1.00 & 0.05 & 0.99 & 1.00 & 1.00 & 1.00 & 1.00 & 1.00 & 1.00 & 1.00 &1.00 & 1.00 \\
 & (0.00) & (0.95) & (0.01) & (0.01) & (0.00) & (0.00) & (0.00) & (0.00) &  (0.00) & (0.00) &  (0.00) & (0.00) \\
\hline
\end{tabular}\\
\vspace{0.5cm}
\normalsize
Table 1. Durbin-Watson test in the particular case $p=1$ and $\rho=-0.8$.
\end{center}

\vspace{1cm}

\begin{center}
\tiny
\begin{tabular}{|c|c|c|c|c|c|c|c|c|c|c|c|c|}
\hline
\multirow{2}{*}{DW} & \multicolumn{12}{c|}{Values of $\rho_0$} \\
\cline{2-13}
& $-0.9$ & $-0.8$ & $-0.7$ & $-0.6$ & $-0.4$ & $-0.2$ & $0.2$ & $0.4$ & $0.6$ & $0.7$ & $0.8$ & $0.9$  \\
\hline
\multirow{2}{*}{N=50} & 0.06 & 0.17 & 0.52 & 0.76 & 0.92 & 0.96 & 0.99 & 0.99 & 1.00 & 1.00 & 1.00 & 1.00 \\
 & (0.94) & (0.83) & (0.48) & (0.24) & (0.08) & (0.04) & (0.01) & (0.01) & (0.00) & (0.00) & (0.00) & (0.00) \\
\hline
\multirow{2}{*}{N=100} & 0.05 & 0.38 & 0.82 & 0.95 & 0.99 & 0.99 & 1.00 & 1.00 & 1.00 & 1.00 &1.00 & 1.00 \\
 & (0.95) & (0.62) & (0.18) & (0.05) & (0.01) & (0.01) & (0.00) & (0.00) &  (0.00) & (0.00) &  (0.00) & (0.00) \\
\hline
\multirow{2}{*}{N=1000} & 0.05 & 1.00 & 1.00 & 1.00 & 1.00 & 1.00 & 1.00 & 1.00 & 1.00 & 1.00 &1.00 & 1.00 \\
 & (0.95) & (0.00) & (0.00) & (0.00) & (0.00) & (0.00) & (0.00) & (0.00) &  (0.00) & (0.00) &  (0.00) & (0.00) \\
\hline
\end{tabular}\\
\vspace{0.5cm}
\normalsize
Table 2. Durbin-Watson test in the particular case $p=2$ and $\rho=-0.9$.
\end{center}


\section*{Appendix A}

\begin{center}
{\small PROOFS OF THE ALMOST SURE CONVERGENCE RESULTS}
\end{center}

\renewcommand{\thesection}{\Alph{section}} 
\renewcommand{\theequation}
{\thesection.\arabic{equation}} \setcounter{section}{1}  
\setcounter{equation}{0}
\ \\
Denote by $A$ and $B$ the polynomials given, for all $z \in \dC$, by
\begin{equation}
\label{DEFAB}
A(z) = 1 - \sum_{k=1}^{p+1} a_k z^k 
\hspace{1cm}\text{and} \hspace{1cm} 
B(z) = 1 - \rho z
\end{equation}
where $a_1= \theta_1 + \rho$, $a_{p+1}= - \rho \theta_p$ and, for $2\leq k \leq p$,
$$a_k= \theta_k - \rho \theta_{k-1}. $$
The fundamental $\text{ARX}(p+1,1)$ equation given by \eqref{EQFONDA} may be rewritten as
\begin{equation}
\label{POLYFONDA}
A(R) X_n = B(R) U_{n-1} + V_n
\end{equation}
where $R$ stands for the shift-back operator $RX_n = X_{n-1}$. On the one hand, $B(z)=0$
if and only if $z=1/\rho$ with $\rho \neq 0$. Consequently, as $|\rho|<1$, $B$ is clearly causal and
for all $z \in \dC$ such that $|\rho z|<1$,
$$
B^{-1}(z)=\frac{1}{1-\rho z}=\sum_{k=0}^\infty \rho^k z^k.
$$
On the other hand, let $P$ be the polynomial given, for all $z \in \dC$, by
\begin{equation}
\label{DEFP}
P(z)= B^{-1}(z)(A(z) - 1) = \sum_{k=1}^{\infty} p_k z^k. 
\end{equation}
It is not hard to see from \eqref{DEFP} that, for $1\leq k \leq p$, 
$p_k = -(\theta_k + \rho^k)$
while, for all $k \geq p+1$, $p_k=-\rho^k$. Consequently, as soon as $\rho \neq 0$, we deduce from
\cite{BeVa2} that the process $(X_n)$ given by \eqref{POLYFONDA} is strongly controllable.
One can observe that in our situation, the usual notion of controllability is the same as the concept
of strong controllability. To be more precise, the assumption that $\rho \neq 0$ implies that the polynomials
$A-1$ and $B$, given by \eqref{DEFAB}, are coprime. It is exactly the so-called controllability condition.
We refer the reader to \cite{BeVa2} for more details on the links between the notions of controllability and strong controllability.
Finally, we clearly obtain Lemma \ref{L-CVGSN} and Theorem \ref{T-ASCVGVARTHETA} from \eqref{ARXPNEW} together with
Theorem 5 of \cite{BeVa2}.
\hfill
$\mathbin{\vbox{\hrule\hbox{\vrule height1ex \kern.5em\vrule height1ex}\hrule}}$


\section*{Appendix B}

\begin{center}
{\small PROOFS OF THE ASYMPTOTIC NORMALITY RESULTS}
\end{center}

\renewcommand{\thesection}{\Alph{section}} 
\renewcommand{\theequation}
{\thesection.\arabic{equation}} \setcounter{section}{2}  
\setcounter{equation}{0}


Theorem \ref{T-CLTVARTHETA} immediately follows from Theorem 8 of \cite{BeVa2}. We shall now proceed to the proof
of Corollary \ref{C-CLTTHETARHO}. First of all, denote for $0\leq k \leq p-1$, 
$$
s_k(\vartheta)=\sum_{i=1}^{k+1} \rho^{k-i+1} \vartheta_i + \rho^{k} \vartheta_{p+2}
$$
where $\rho= -\vartheta_{p+2}$ and $s_p(\vartheta)= \rho$. In addition, let
\begin{equation}
\label{GCLT}
g(\vartheta) =
 \Delta \vartheta
 =
\begin{pmatrix}
\ s_{0}(\vartheta)  \ \\ 
\ s_{1}(\vartheta)  \ \\ 
\ \vdots  \ \\  
\ s_{p}(\vartheta)  \ 
\end{pmatrix}.
\vspace{1ex}
\end{equation}
One can easily check that the gradient of the function $g$ is given by
\begin{equation}
\label{GRAD}
\nabla g(\vartheta) =
\begin{pmatrix}
1 & 0 & \cdots & \cdots & \cdots & 0 & \xi_0(\theta) \\ 
\rho & 1 & 0 & \cdots & \cdots & 0 & \rho - \xi_1(\theta) \\ 
\rho^2 & \rho & 1 & 0 & \cdots & 0 & \rho^2 -  \xi_2(\theta) \\ 
\cdots & \cdots & \cdots & \cdots & \cdots & \cdots & \cdots \\ 
\rho^{p-1} & \rho^{p-2} & \cdots & \rho & 1 & 0 & \rho^{p-1} -\xi_{p-1}(\theta) \\ 
0 & 0 & \cdots & \cdots & \cdots & 0 & \xi_{p} (\theta)
\end{pmatrix}
\end{equation}
where $\xi_0(\theta)=1$, $\xi_p(\theta)=-1$ and, for all
$1\leq k \leq p-1$, 
$$
\xi_k(\theta) =\sum_{i=1}^{k}\rho^{k-i}\theta_i.
$$
The gradient of $g$ coincides with the matrix $\nabla$ given by \eqref{DEFNABLA}. On the one hand, it follows from
\eqref{INIPARA} and \eqref{GCLT} that
\begin{equation}
\label{DEFG}
g(\vartheta)=
\begin{pmatrix}
\ \theta \ \\
\ \rho \
\end{pmatrix}.
\end{equation}
On the other hand, we already saw from \eqref{CLTVARTHETA} that
\begin{equation}  
\label{CLTVARTHETAB}
\sqrt{n}(\wh{\vartheta}_{n}-\vartheta )\liml
\cN(0,\Lambda^{-1} ).
\end{equation}
Consequently, we deduce from \eqref{DEFG} and \eqref{CLTVARTHETAB} together with the well-known delta method that
\begin{equation*}
\sqrt{n}
\begin{pmatrix}
\ \wh{\theta}_{n} - \theta \ \\
\ \wh{\rho}_{n} - \rho \
\end{pmatrix}
\liml
\cN\left(0, \Sigma \right)
\end{equation*}
where $\Sigma =\nabla \Lambda^{-1} \nabla^{t}$, which completes the proof of 
Corollary \ref{C-CLTTHETARHO}.
\hfill
$\mathbin{\vbox{\hrule\hbox{\vrule height1ex \kern.5em\vrule height1ex}\hrule}}$


\section*{Appendix C}

\begin{center}
{\small PROOFS OF THE DURBIN-WATSON STATISTIC RESULTS}
\end{center}

\renewcommand{\thesection}{\Alph{section}} 
\renewcommand{\theequation}
{\thesection.\arabic{equation}} \setcounter{section}{3}  
\setcounter{equation}{0}


\noindent{\bf Proof of Theorem \ref{T-ASCVGDW}.}
We are now in position to investigate the asymptotic behavior of the Durbin-Watson statistic. 
First pf all, we start with the proof of Theorem \ref{T-ASCVGDW}.
Recall from \eqref{ARXPCF} together with \eqref{RESIDUALN} that the residuals are given, for all $1 \leq k \leq n$, by 
\begin{equation}
\label{NEWRES}
\wh{\veps}_{k}=X_k-U_{k-1}-\wh{\theta}_{n}^{\,t}\varphi_{k-1}=\veps_k - \wt{\theta}_n^{\,t} \varphi_{k-1}
\end{equation} 
where $\wt{\theta}_n = \wh{\theta}_n - \theta$. 
For all $n \geq 1$, denote
\begin{equation*} 
I_{n} =\sum_{k=1}^{n}\wh{\veps}_{k} \wh{\veps}_{k-1}
\hspace{1cm}\text{and}\hspace{1cm}
J_{n} =\sum_{k=0}^{n}\wh{\veps}_{k}^{\, \, 2}.
\end{equation*} 
It is not hard to see that
\begin{eqnarray}
\label{DECOIN}
I_{n} & = & \wh{\veps}_{0}\wh{\veps}_{1} + P_n^{I} - \wt{\theta}_{n}^{\,t}  Q_{n}^{I} + \wt{\theta}_n^{\,t} S_{n-1}^{I}\wt{\theta}_{n}, 
\vspace{1ex}\\
J_{n} & = & \wh{\veps}_{0}^{\,2} + P_n^{J} - 2 \wt{\theta}_{n}^{\,t}  Q_{n}^{J} + \wt{\theta}_n^{\,t} S_{n-1}^{J}\wt{\theta}_{n}
\label{DECOJN}
\end{eqnarray} 
where
\begin{equation*}
P_n^{I}=\sum_{k=2}^n \veps_{k} \veps_{k-1},
\hspace{1cm} 
Q_n^{I}=\sum_{k=2}^n (\varphi_{k-2} \veps_{k}+\varphi_{k-1}\veps_{k-1}),
\hspace{1cm} 
S_n^{I}=\sum_{k=1}^n \varphi_{k}\varphi_{k-1}^{t},
\end{equation*}
and
\begin{equation*}
P_n^{J}=\sum_{k=1}^n \veps_{k}^2,
\hspace{1cm}
Q_n^{J}=\sum_{k=1}^n \varphi_{k-1} \veps_{k},
\hspace{1cm} 
S_n^{J}=\sum_{k=0}^n \varphi_{k}\varphi_{k}^{t}.
\end{equation*}
We deduce from \eqref{COR} that
\begin{equation}
\label{DECPNJ}
(1- \rho^2) P_n^{J}=\rho ^2(\veps_{0}^2 - \veps_{n}^2)+ 2 \rho N_n + L_n
\end{equation}
where
\begin{equation*} 
N_{n} =\sum_{k=1}^{n}\veps_{k-1}V_k
\hspace{1cm}\text{and}\hspace{1cm}
L_{n} =\sum_{k=1}^{n}V_{k}^{2}.
\end{equation*} 
Moreover, we assume that $(V_n)$ has a finite conditional moment of order $a>2$. 
Then, it follows from Proposition 1.3 23 page 25 of \cite{Duflo} that
\begin{equation}
\label{SLLNV}
\lim_{n \rightarrow \infty}\frac{1}{n}\sum_{k=1}^{n}V_k^2=\sigma^2
\hspace{1cm} \text{a.s.}
\end{equation}
In addition, we also have from Corollary 1.3 21 page 23 of \cite{Duflo} that
for all $2\leq b<a$, 
\begin{equation}
\label{CVGV}
\sum_{k=1}^{n} |V_k|^b=\cO(n) \hspace{1cm}\textnormal{a.s.}
\end{equation}
and
\begin{equation}
\label{SUPV}
\sup_{1 \leq k \leq n}  |V_k|=o(n^{1/b}) \hspace{1cm}\textnormal{a.s.}
\end{equation}
However, we clearly obtain from \eqref{COR} that
\begin{equation}
\label{MAJNOISE}
\sup_{1 \leq k \leq n} |\veps_k| \leq \frac{1}{1-|\rho |} \Bigl( |\veps_0| + \sup_{1 \leq k \leq n} | V_{k} | \Bigr)
\end{equation}
and
 \begin{equation}
 \sum_{k=1}^{n} |\veps_{k}|^b \leq 
 \Bigl(  1- | \rho | \Bigr)^{-b} \left(  |\veps_0|^b  +  
 \sum_{k=1}^{n}  |V_{k}|^b \right)
\label{MAJSNOISE} 
 \end{equation}
which of course implies that
\begin{equation}
\label{SUPNOISE}
\sup_{1 \leq k \leq n} |\veps_k|=o(n^{1/b}) \hspace{1cm}\textnormal{a.s.}
\end{equation}
and 
\begin{equation}
 \sum_{k=1}^{n} |\veps_{k}|^b = \cO(n) \hspace{1cm}\textnormal{a.s.}
\label{CVGNOISEB} 
 \end{equation}
In the particular case $b=2$, we find that
\begin{equation}
 \sup_{1 \leq k \leq n}  \veps_{k}^2 = o(n) \hspace{1cm}\text{and}\hspace{1cm}
  \sum_{k=1}^{n} \veps_{k}^2 = \cO(n) \hspace{1cm}\textnormal{a.s.}
\label{RESNOISE2} 
 \end{equation} 
Hereafter,  $(N_n)$ is a locally square-integrable real martingale with predictable quadratic variation
given, for all $n \geq 1$, by
\begin{equation*}
\langle N \rangle_n=\sigma^2\sum_{k=0}^{n-1} \veps_k^2.
\end{equation*}
Therefore, we deduce from \eqref{RESNOISE2} and the strong law of large
numbers for martingales given e.g. by Theorem 1.3.15 page 20 of \cite{Duflo} that
\begin{equation}
\label{CVGMARTN}
\lim_{n\rightarrow \infty} \frac{N_{n}}{n} = 0 \hspace{1cm} \text{a.s.}
\end{equation}
Hence, we obtain from \eqref{DECPNJ} together with \eqref{SLLNV}, \eqref{RESNOISE2} and \eqref{CVGMARTN}
that
\begin{equation}
\label{CVGPNJ}
\lim_{n\rightarrow \infty} \frac{P_{n}^{J}}{n} = \frac{\sigma^2}{1- \rho^2} \hspace{1cm} \text{a.s.}
\end{equation}
Furthermore, convergence \eqref{CVGSN} immediately implies that
\begin{equation}
\label{CVGSNJ}
\lim_{n\rightarrow \infty} \frac{1}{n}S_n^J=\sigma^2 \rI_p \hspace{1cm} \text{a.s.}
\end{equation}
We also obtain from the Cauchy-Schwarz inequality, \eqref{RESNOISE2} and \eqref{CVGSNJ}, that
$$
\parallel Q_n^{J} \parallel = \cO(n) \hspace{1cm}\textnormal{a.s.}
$$
Consequently, we find from the conjunction of \eqref{ASCVGTHETA}, \eqref{DECOJN}, \eqref{CVGMARTN}
and \eqref{CVGSNJ} that
\begin{equation}
\label{CVGJN}
\lim_{n\rightarrow \infty} \frac{J_n}{n}= \frac{\sigma^2}{1- \rho^2} \hspace{1cm} \text{a.s.}
\end{equation}
By the same token, as
\begin{equation}
\label{PNIJ}
P_n^{I}=\rho P_{n-1}^{J}  + N_n + \rho \veps_0^2 -\veps_0 \veps_1,
\end{equation}
it follows from \eqref{CVGMARTN} and \eqref{CVGPNJ} that
\begin{equation}
\label{CVGPNI}
\lim_{n\rightarrow \infty} \frac{P_{n}^{I}}{n} = \frac{\sigma^2 \rho}{1- \rho^2} \hspace{1cm} \text{a.s.}
\end{equation}
which leads via \eqref{DECOIN} to
\begin{equation}
\label{CVGIN}
\lim_{n\rightarrow \infty} \frac{I_n}{n}= \frac{\sigma^2 \rho}{1- \rho^2} \hspace{1cm} \text{a.s.}
\end{equation}
Therefore, we obtain from definition \eqref{NEWLSRHO} together with \eqref{CVGJN} and  \eqref{CVGIN} that
\begin{equation}
\label{CVGROO}
\lim_{n\rightarrow \infty} \overline{\rho}_{n}  = \lim_{n\rightarrow \infty} \frac{I_n}{J_{n-1}} = \rho
\hspace{1cm}\text{a.s.}
\end{equation}
In order to establish the almost sure rate of convergence given by \eqref{ASCVGNRHO}, it is necessary
to make some sharp calculations. We infer from \eqref{DECOIN}, \eqref{DECOJN} and \eqref{PNIJ} that
\begin{equation}
I_{n} - \rho J_{n-1} = N_n - Q_{n} + R_{n}
\label{SRHON}
\end{equation}
where $Q_n=( Q_{n}^{I} - 2 \rho  Q_{n-1}^{J})^{t} \wt{\theta}_{n}$ and
$$
R_n= \wh{\veps}_{0}\wh{\veps}_{1} - \veps_0 \veps_1 + \rho \veps_0^2- \rho\wh{\veps}_{0}^{\,2}  
+ \wt{\theta}_n^{\,t}( S_{n-1}^{I} -\rho  S_{n-2}^{J})\wt{\theta}_{n}.
$$
On the one hand, it follows from convergence \eqref{CVGSN} together with \eqref{ASCVGTHETA} 
and the Cauchy-Schwarz inequality, that
$$
| Q_n | = \cO(\sqrt{n \log n}) \hspace{1cm}
\text{and} \hspace{1cm}
| R_n |= \cO(\log n) \hspace{1cm}
\textnormal{a.s.}
$$
On the other hand, as $\langle N \rangle_n = \cO(n) $ a.s., we deduce from
Theorem 1.3.24 page 26 of \cite{Duflo} related to the almost sure rate of convergence in
the strong law of large numbers for martingales that
$| N_n | = \cO(\sqrt{n \log n}) $ a.s.
Therefore, we can conclude from \eqref{CVGJN} and \eqref{SRHON} that
\begin{equation}
\label{ASCVGNRHOO}  
( \overline{\rho}_{n}-\rho)^{2}= \mathcal{O} \left( \frac{\log n}{n} \right) 
\hspace{0.5cm}\text{a.s.}
\end{equation}
The proof of the almost sure convergence of $\wh{D}_n$ to $D=2(1- \rho)$ immediately follows from \eqref{CVGROO}. 
As a matter of fact, it follows from \eqref{DEFDN} that 
\begin{equation}
\label{DEVDW}
J_{n} \wh{D}_n= 2\big(J_{n-1}-I_n\big)+ \wh{\veps}_{n}^{\, \, 2} -  \wh{\veps}_{0}^{\, \, 2}.
\end{equation}
Dividing both sides of \eqref{DEVDW} by $J_{n-1}$, we obtain that
\begin{equation}
\label{DECODW}
\wh{D}_n= 2(1-f_n)\big(1-\overline{\rho}_n\big)+ g_n
\end{equation}
where 
\begin{equation*} 
f_{n} =\frac{\wh{\veps}_{n}^{\, \, 2}} {J_n}.
\hspace{1cm}
\text{and}
\hspace{1cm}
g_{n} =\frac{\wh{\veps}_{n}^{\, \, 2} - \wh{\veps}_{0}^{\, \, 2}}{J_{n-1}}.
\end{equation*} 
However, convergence \eqref{CVGJN} ensures that $f_{n}$ and $g_{n}$ both tend to zero a.s.
Consequently, \eqref{CVGROO} immediately implies that
\begin{equation}
\label{CVGDOO}
\lim_{n\rightarrow \infty} \wh{D}_{n}  =2(1- \rho)
\hspace{1cm}\text{a.s.}
\end{equation}
The almost sure rate of convergence given by \eqref{ASCVGDW} requires some additional assumption
on $(V_n)$. Hereafter, assume that the noise $(V_n)$ has a finite conditional moment of order $>4$.
We clearly obtain from \eqref{CVGSN}, \eqref{ASCVGTHETA} together with  \eqref{NEWRES} and \eqref{SUPNOISE} with $b=4$ that
\begin{equation}
\label{SUPNOO}
\sup_{1 \leq k \leq n}  \wh{\veps}_{k}^{\, \, 2}=o(\sqrt{n}) + o( \log n) = o(\sqrt{n})\hspace{1cm}\text{a.s.}
\end{equation}
which leads by \eqref{CVGJN} to
\begin{equation}
\label{CVGFN}
f_n = o\left(\frac{1}{\sqrt{n}} \right) 
\hspace{1cm}\text{and} \hspace{1cm}
g_n = o\left(\frac{1}{\sqrt{n}} \right) 
\hspace{1cm}\text{a.s.}
\end{equation}
In addition, it follows from \eqref{DECODW} that
\begin{equation}
\label{DECODDW}
 \wh{D}_{n} - D = -2 (1- f_n) \big( \overline{\rho}_{n} - \rho \big) + 2 (\rho - 1)f_n + g_n
\end{equation}
where $D=2(1-\rho)$. Consequently, we obtain by \eqref{ASCVGNRHOO}  and \eqref{CVGFN} that
\begin{equation}
\Bigl( \wh{D}_{n}-D  \Bigr)^{2} =  \cO\Bigl( ( \overline{\rho}_{n} - \rho )^2 \Bigr) + \cO\bigl(f_n^2\bigr) = \cO \left( \frac{\log n}{n} \right) 
\hspace{1cm}\text{a.s.}
\end{equation}
which achieves the proof of Theorem \ref{T-ASCVGDW}.
\hfill
$\mathbin{\vbox{\hrule\hbox{\vrule height1ex \kern.5em\vrule height1ex}\hrule}}$
\ \vspace{2ex} \\
\noindent{\bf Proof of Theorem \ref{T-CLTDW}.} The proof of Theorem \ref{T-CLTDW} is much more difficult to handle.
We already saw from \eqref{SRHON} that
\begin{equation}
J_{n-1} (\overline{\rho}_{n} - \rho ) = N_n - Q_{n} + R_{n}
\label{SRHOON}
\end{equation}
where the remainder $R_{n}$ plays a negligible role. This is of course not the case for $Q_n=( Q_{n}^{I} - 2 \rho  Q_{n-1}^{J})^{t} \wt{\theta}_{n}$.
We know from \eqref{INIPARA} and \eqref{LSTHETARHO} that
\begin{equation}
\label{MAINDECOCLT}
\begin{pmatrix}
\ \wh{\theta}_{n} - \theta\ \\
\ \wh{\rho}_{n} - \rho \
\end{pmatrix}
 = \wh{\Delta}_{n} \wh{\vartheta}_{n}- \Delta \vartheta = \wh{\Delta}_{n} \Bigl( \wh{\vartheta}_{n} -  \vartheta\Bigr)+
\Bigl( \wh{\Delta}_{n} - \Delta \Bigr) \vartheta.
\end{equation}
One can observe that in the particular case $p=1$, the right-hand side of \eqref{MAINDECOCLT}reduces to
the vector
$$
\Delta \Bigl( \wh{\vartheta}_{n} -  \vartheta\Bigr)
$$
since
$$
\wh{\Delta}_{n} = \Delta = 
\begin{pmatrix}
\ 1 & 0 & 1\ \\
\ 0 & 0 & \!\!-1 \
\end{pmatrix}.
$$
For all $1 \leq k \leq p-1$, denote 
$$
s_n(k)=\sum_{i=0}^{k} \wh{\rho}_{n}^{\,i}\rho^{k-i}.
$$
It is easily check that $\wh{\Delta}_{n} - \Delta$ can be rewritten as
$\wh{\Delta}_{n} - \Delta = ( \wh{\rho}_{n} - \rho)A_n$
where $A_n$ is the rectangular matrix of size $(p+1)\!\times\!(p+2)$ given by
\begin{equation*}
A_n =
\begin{pmatrix}
0 & 0 & \cdots & \cdots & \cdots & 0 & 0 & 0 \\ 
1 & 0 & 0 & \cdots & \cdots & 0 & 0 & 1 \\ 
s_n(1) & 1 & 0 & 0 & \cdots & 0 & 0 & s_n(1) \\ 
\cdots & \cdots & \cdots & \cdots & \cdots & \cdots & \cdots & \cdots \\ 
s_n(p-2) & s_n(p-3) & \cdots & s_n(1) & 1 & 0 & 0 & s_n(p-2)\\ 
0 & 0 & \cdots & \cdots & \cdots & 0 & 0 & 0
\end{pmatrix}.
\end{equation*}
It was already proven that $\wh{\rho}_{n}$ converges almost surely to $\rho$ which 
implies that for all $1 \leq k \leq p-1$,
$$
\lim_{n\rightarrow \infty} s_{n}(k)  =(k+1)\rho^k
\hspace{1cm}\text{a.s.}
$$
It immediately leads to the almost sure convergence of $A_n$ to the matrix $A$ given by
\begin{equation}
\label{DEFA}
A =
\begin{pmatrix}
0 & 0 & \cdots & \cdots & \cdots & 0 & 0 & 0 \\ 
1 & 0 & 0 & \cdots & \cdots & 0 & 0 & 1 \\ 
2 \rho & 1 & 0 & 0 & \cdots & 0 & 0 & 2 \rho \\ 
\cdots & \cdots & \cdots & \cdots & \cdots & \cdots & \cdots & \cdots \\ 
(p-1) \rho^{p-2} & (p-2) \rho^{p-3} & \cdots & 2 \rho & 1 & 0 & 0 & (p-1) \rho^{p-2}\\ 
0 & 0 & \cdots & \cdots & \cdots & 0 & 0 & 0
\end{pmatrix}.
\end{equation}
Denote by $e_{p+2}$ the last vector of the canonical basis of $\dR^{p+2}$. We clearly have
from \eqref{MAINDECOCLT} that
$$\wh{\rho}_{n} - \rho = - e_{p+2}^t \Bigl( \wh{\vartheta}_{n} -\vartheta \Bigr) $$
which implies that
\begin{equation} 
\label{NEWDECOCLT}
\wh{\Delta}_{n} \wh{\vartheta}_{n}- \Delta \vartheta=B_n \Bigl( \wh{\vartheta}_{n} -\vartheta \Bigr)
\end{equation} 
where $B_n=\wh{\Delta}_{n} - A_n \vartheta e_{p+2}^t$. By the same token, let $0_p$ be the null
vector of $\dR^p$ and denote by $\rJ_p$ the rectangular matrix of size $p\!\times\!(p+1)$ given by
$$\rJ_p=\Bigl(\rI_p \ \ 0_p\Bigr).$$
We deduce from \eqref{MAINDECOCLT} and \eqref{NEWDECOCLT} that
\begin{equation}
\label{DECOVARTHETA}
\wt{\theta}_n=\wh{\theta}_{n} - \theta= \rJ_p
\begin{pmatrix}
\ \wh{\theta}_{n} - \theta\ \\
\ \wh{\rho}_{n} - \rho \
\end{pmatrix}
 = \rJ_p \Bigl( \wh{\Delta}_{n} \wh{\vartheta}_{n}- \Delta \vartheta \Bigr)= \rJ_p B_n \Bigl( \wh{\vartheta}_{n} -\vartheta \Bigr).
\end{equation}
We also have from \eqref{LSVARTHETA} that
\begin{equation}
\label{DEFDVARTHETA}
\wh{\vartheta}_{n} -\vartheta=S_{n-1}^{-1}M_n 
\end{equation}
where
$$
M_n=\sum_{k=1}^n \Phi_{k-1}V_k.
$$
Consequently, it follows from \eqref{SRHOON}, \eqref{DECOVARTHETA} and \eqref{DEFDVARTHETA} that
$$
J_{n-1} (\overline{\rho}_{n} - \rho ) = N_n - C_{n}^{t}M_n + R_{n}
$$
where $C_n=S_{n-1}^{-1} B_n^{t} \rJ_p ^{t} T_n$ with $T_n=Q_n^{I} -2 \rho Q_{n-1}^{J}$, which leads to the
main decomposition 
\begin{equation}  
\label{MAINDECOCLTTHETARHO}
\sqrt{n} \begin{pmatrix}
\ \wh{\vartheta}_{n} - \vartheta  \\
\ \overline{\rho}_{n} - \rho
\end{pmatrix}
 =\frac{1}{\sqrt{n}} \cA_n Z_n + \cB_n
\end{equation}
where
\begin{equation*}
Z_n= 
\begin{pmatrix}
M_n  \\
N_n
\end{pmatrix},
\vspace{2ex}
\end{equation*}
\begin{equation*}
\cA_n= n
\begin{pmatrix}
S_{n-1}^{-1} & \ \ 0_{p+2}  \vspace{1ex}\\
J_{n-1}^{-1}C_n^{t} & J_{n-1}^{-1}
\end{pmatrix}
\hspace{1cm}\text{and}\hspace{1cm}
\cB_n= \sqrt{n}
\begin{pmatrix}
\ 0_{p+2} \ \vspace{1ex}\\
 J_{n-1}^{-1}R_n
\end{pmatrix}
\end{equation*}
where $0_{p+2}$ stands for the null
vector of $\dR^{p+2}$. The random sequence $(Z_n)$ is a locally square-integrable
$(p+3)$-dimensional martingale with predictable quadratic variation given, for all $n \geq 1$, by
\begin{equation*}
\langle Z \rangle_n=
\sigma^2 \sum_{k=0}^{n-1}
\begin{pmatrix}
\ \Phi_k \Phi_k^{t} \ & \ \Phi_k \veps_k \ \\
\ \Phi_k^{t}\veps_k \ & \  \veps_k^2 \ 
\end{pmatrix}.
\end{equation*}
We already saw from \eqref{CVGSN} that
\begin{equation}
\label{CVGCPHI}
\lim_{n\rightarrow \infty} \frac{1}{n}\sum_{k=0}^{n}\Phi_k \Phi_k^{t}=\sigma^2 \Lambda \hspace{1cm} \text{a.s.}
\end{equation}
In addition, it follows from \eqref{CVGPNJ} that
\begin{equation}
\label{CVGCVEPS}
\lim_{n\rightarrow \infty} \frac{1}{n}\sum_{k=0}^{n}\veps_k^2=\frac{\sigma^2}{1- \rho^2} \hspace{1cm} \text{a.s.}
\end{equation}
Furthermore, it is not hard to see that
\begin{equation*}
\lim_{n\rightarrow \infty} \frac{1}{n}\sum_{k=1}^{n}X_kV_k=\lim_{n\rightarrow \infty} \frac{1}{n}\sum_{k=1}^{n}X_k\veps_k
=\sigma^2 \hspace{1cm} \text{a.s.}
\end{equation*}
Moreover, we obtain from \eqref{COR} that for all $n \geq p$ and for all $1 \leq \ell \leq p$,
$$\veps_{n} = \rho^\ell \veps_{n-\ell} +  \sum_{i=0}^{\ell-1} \rho^i V_{n-i}.$$
Consequently,
\begin{eqnarray*}
\sum_{k=1}^{n} X_{k-\ell}\veps_{k} & = & 
\sum_{k=1}^{n} X_{k-\ell} \Bigl(\rho^\ell \veps_{k-\ell} +  \sum_{i=0}^{\ell-1} \rho^i V_{k-i}\Bigr),\\
& = & \rho^\ell \sum_{k=1}^{n} X_{k-\ell} \veps_{k-\ell} 
+ \sum_{i=0}^{\ell-1} \rho^i  \sum_{k=1}^{n} X_{k-\ell} V_{k-i},
\end{eqnarray*}
which implies that for all $1 \leq \ell \leq p$,
\begin{equation*}
\lim_{n\rightarrow \infty} \frac{1}{n}\sum_{k=1}^{n}X_{k-\ell}\veps_{k}
=\sigma^2 \rho^\ell\hspace{1cm} \text{a.s.}
\end{equation*}
On the other hand, we infer from \eqref{ARXP} that
$$
\sum_{k=1}^n U_{k-1} \veps_k=\sum_{k=1}^{n}X_k\veps_k - \sum_{k=1}^{n}\veps_k^{2}
-\sum_{i=1}^p \theta_{i} \sum_{k=1}^n X_{k-i} \veps_k.
$$
Hence, we find that
\begin{equation*}
\lim_{n\rightarrow \infty} \frac{1}{n} \sum_{k=1}^{n} U_{k-1}\veps_{k} 
 =  -\sigma^2 \left( \frac{\rho^2}{1- \rho^2} + \sum_{i=1}^p \theta_{i} \rho^{i} \right)
\hspace{1cm} \text{a.s.}
\end{equation*}
Consequently, we obtain that
\begin{equation}
\label{CVGCPHIVEPS}
\lim_{n\rightarrow \infty} \frac{1}{n} \sum_{k=1}^{n} \Phi_{k}\veps_{k} 
 =  \sigma^2 \zeta
\hspace{1cm} \text{a.s.}
\end{equation}
where the vector $\zeta$ is given by
\begin{equation}
\label{DEFZETA}
\zeta
 =
\begin{pmatrix}
\ 1  \ \\ 
\ \rho  \ \\ 
\ \vdots  \ \\  
\ \rho^{p} \ \\
\  \varrho_p \
\end{pmatrix}
\vspace{1ex}
\end{equation}
with
$$
\varrho_p= - \eta \rho^2 - \sum_{i=1}^p \theta_{i} \rho^{i}
\hspace{1cm} \text{and} \hspace{1cm} 
\eta = \frac{1}{1- \rho^2}.
$$
We deduce from \eqref{CVGCPHI}, \eqref{CVGCVEPS} and \eqref{CVGCPHIVEPS} that
\begin{equation}
\label{CVGIPZN}
\lim_{n\rightarrow \infty} \frac{1}{n} \langle Z \rangle_n= \cZ
\hspace{1cm}\text{a.s.}
\end{equation}
where $\cZ$ is the positive-semidefinite symmetric matrix given by
\begin{equation}
\label{DEFCALL}
\cZ=\sigma^4
\begin{pmatrix}
\Lambda & \zeta  \vspace{1ex}\\
\zeta^{t}  & \eta
\end{pmatrix}.
\end{equation}
One can observe that $\cZ$ is not positive-definite as $\det(\cZ)=0$. Nevertheless,
it is not hard to see that $(Z_n)$ satisfies the Lindeberg condition.
Therefore, we can conclude from the central limit theorem for multidimensional martingales 
given e.g. by Corollary 2.1.10 of \cite{Duflo} that
\begin{equation}
\label{CLTZN}
\frac{1}{\sqrt{n}} Z_{n} \liml \cN \Bigl(0, \cZ \Bigr).
\end{equation}
Furthermore, we already saw from \eqref{DEFA} that
\begin{equation*}
\lim_{n\rightarrow \infty} A_{n} 
 =  A
\hspace{1cm} \text{a.s.}
\end{equation*}
which implies that
\begin{equation*}
\lim_{n\rightarrow \infty} B_{n} 
 = \Delta - A \vartheta e_{p+2}^{t}
\hspace{1cm} \text{a.s.}
\end{equation*}
 One can easily check from \eqref{DEFDELTA} and \eqref{DEFA} that
 $$
 \Delta - A \vartheta e_{p+2}^{t}= \nabla
 $$
 where the matrix $\nabla$ is given by \eqref{DEFNABLA}.
Moreover, it follows from the previous calculation that
\begin{equation*}
\lim_{n\rightarrow \infty} \frac{1}{n}T_{n} 
 =  \sigma^2 (1- \rho^2) T \hspace{1cm} \text{a.s.}
\end{equation*}
where the vector $T$ is given by
\begin{equation}
\label{DEFVT}
T
 =
\begin{pmatrix}
1  \\ 
\rho  \\ 
\vdots  \\  
 \rho^{p-1} 
\end{pmatrix}.
\end{equation}
Consequently, as 
$C_n=S_{n-1}^{-1} B_n^{t} \rJ_p^{t} T_n$, we obtain from \eqref{CVGSN} that
\begin{equation*}
\lim_{n\rightarrow \infty} C_{n} 
 =  C
\hspace{1cm} \text{a.s.}
\end{equation*}
where
$$
C = (1- \rho^2) \Lambda^{-1} \nabla^{t} \rJ_p ^{t} T.
$$
Hence, we obtain from \eqref{CVGSN} and \eqref{CVGJN} that
\begin{equation}
\label{CVGFINCA}
\lim_{n\rightarrow \infty} \cA_{n} 
 =  \cA
\hspace{1cm} \text{a.s.}
\end{equation}
where
$$
\cA= \sigma^{-2}
 \begin{pmatrix}
\Lambda^{-1} & \ \ 0_{p+2}  \vspace{1ex}\\
(1- \rho^2) C^{t} & (1- \rho^2)
\end{pmatrix}.
$$
In addition, we clearly have from \eqref{CVGJN}  that
\begin{equation}
\label{CVGFINCB}
\lim_{n\rightarrow \infty} \cB_n= \begin{pmatrix}
\ 0_{p+2} \ \vspace{1ex}\\
 0
\end{pmatrix}\hspace{1cm} \text{a.s.}
\end{equation}
Finally, we deduce from the conjunction of \eqref{MAINDECOCLTTHETARHO}, 
\eqref{CLTZN}, \eqref{CVGFINCA}, \eqref{CVGFINCB}, together with
Slutsky's lemma that
\begin{equation*}  
\sqrt{n} \begin{pmatrix}
\ \wh{\vartheta}_{n} - \vartheta  \\
\ \overline{\rho}_{n} - \rho
\end{pmatrix}
 \liml  \cN \Bigl(0, \cA \cZ \cA^{\prime}\Bigr)
\end{equation*}
which leads to 
\begin{equation*}
\sqrt{n}(\overline{\rho}_{n}-\rho)\liml
\cN\left(0,\tau^2\right)
\end{equation*}
where the asymptotic variance $\tau^2$ is given by
$$
\tau^2 = (1-\rho^2)^{2} \Bigl( C^{t} \Lambda C + 2 C^{t} \zeta + \eta \Bigr).
$$
However, one can easily see from \eqref{DEFALPHABETA} and \eqref{DEFGAMMA} that
\begin{eqnarray*}
\tau^2
& = & (1-\rho^2)^{2} \parallel \Lambda^{1/2} \alpha + (1-\rho^2) \Lambda^{-1/2} \nabla^{t} \beta  \parallel^2,  \\
& = & (1-\rho^2)^{2} \parallel \Lambda^{-1/2}( \Lambda \alpha + (1-\rho^2) \nabla^{t} \beta)  \parallel^2, \\
& = & (1-\rho^2)^{2} \parallel \Lambda^{-1/2} \gamma  \parallel^2, \\
& = & (1-\rho^2)^{2}  \gamma^{t} \Lambda^{-1} \gamma,
\end{eqnarray*}
which completes the proof of \eqref{CLTRHOO}. Finally, \eqref{CLTDW} immediately follows
from \eqref{CLTRHOO} together with \eqref{CVGFN} and \eqref{DECODDW}, which achieves 
the proof of  Theorem \ref{T-CLTDW}.
\hfill
$\mathbin{\vbox{\hrule\hbox{\vrule height1ex \kern.5em\vrule height1ex}\hrule}}$
\ \vspace{2ex} \\
\noindent{\bf Proof of Theorem \ref{T-DWTEST}.} The proof of Theorem \ref{T-DWTEST} is straightforward. As a matter
of fact, we already know from \eqref{CLTDW} that under the null hypothesis $\cH_0$,  
\begin{equation}
\label{CLTDWT}
\sqrt{n}(\wh{D}_{n}-D_0)\liml
\cN\left(0,4\tau^2\right)
\end{equation}
where the asymptotic variance $\tau^2$ is given by \eqref{EXPTAU}.
In addition, it follows from \eqref{DEFGAMMATAUNEW} that
\begin{equation}
\label{CVGTAUT}
\lim_{n\rightarrow \infty} \wh{\tau}_{n}^{\,2}= \tau^2
\hspace{1cm} \text{a.s.}
\end{equation}
Hence, we deduce from \eqref{CLTDWT}, \eqref{CVGTAUT} and Slutsky's lemma that
under the null hypothesis $\cH_0$,
\begin{equation*}
\frac{\sqrt{n}}{2\wh{\tau}_n} \left( \wh{D}_{n} - D_{0} \right) \liml \cN ( 0, 1)
\end{equation*}
which obviously implies \eqref{DWTESTH0}. It remains to show that under the alternative hypothesis $\cH_1$,
our test statistic goes almost surely to infinity. Under $\cH_1$, we already saw
from Theorem \ref{T-ASCVGDW} that
$$
\lim_{n\rightarrow \infty} \overline{\rho}_{n} - \rho_0= \rho - \rho_0
\hspace{1cm} \text{a.s.}
$$
and this limit is different from zero. Consequently,
\begin{equation}
\label{CVGTALT}
\lim_{n\rightarrow \infty} n \big( \overline{\rho}_{n} - \rho_0\big)^2= + \infty
\hspace{1cm} \text{a.s.}
\end{equation}
However, we clearly find from \eqref{DECODDW} that
\begin{equation}
\label{DECODWT}
\wh{D}_{n} - D_{0}= -2  \big( \overline{\rho}_{n} - \rho_0\big) +e_n
\end{equation}
where $e_n=-2f_n(1-\overline{\rho}_{n}) + g_n$. Finally, \eqref{CVGTALT} and \eqref{DECODWT}
clearly lead to \eqref{DWTESTH1}, completing the proof of Theorem \ref{T-DWTEST}.
\hfill
$\mathbin{\vbox{\hrule\hbox{\vrule height1ex \kern.5em\vrule height1ex}\hrule}}$


\end{document}